%
%
%
%
\documentclass{gtpart}
\usepackage{amssymb}
\usepackage{enumerate}   
\usepackage{graphicx}
\usepackage{tikz-cd} 
\usepackage{color}
\usepackage{hyperref}

\newtheorem{theorem}{Theorem}

\newtheorem{conjecture}{Conjecture}
\newtheorem{remark}{Remark}
\newtheorem{lemma}{Lemma}

\def\Zbb{\mathbb{Z}}
\def\Rbb{\mathbb{R}}
\def\Cbb{\mathbb{C}}
\def\Fbb{\mathbb{F}}

\def\Mt{\widetilde{M}}

\newcommand{\G}[1]{
	\mathrm{SL}_{#1}(\mathbb{C})
}

\begin{document}
	
	\title{A vanishing identity on adjoint Reidemeister torsions \\of twist knots}
	

	\author{Seokbeom Yoon}
	\email{sbyoon15@kias.re.kr}

	
	
	
	
	\begin{abstract} 
		For a compact oriented  3-manifold with torus boundary the adjoint Reidemeister torsion is defined as a function on the $\mathrm{SL}_2(\mathbb{C})$-character variety  depending on a choice of a boundary curve.
		Under reasonable assumptions, it is conjectured that the adjoint torsion satisfies a certain type of vanishing identities.
		In this paper, we prove that the conjecture holds for all hyperbolic twist knot exteriors by using Jacobi's residue theorem. 
	\end{abstract}
	
	\maketitle
	
	\section{Introduction}
	\subsection{Overview}
	
	Let $M$ be a compact oriented 3-manifold with torus boundary and let $\mathcal{X}^\mathrm{irr}(M)$ be the character variety of irreducible representations $\pi_1(M)\rightarrow \G{2}$. 
	We assume that every irreducible component of $\mathcal{X}^\mathrm{irr}(M)$ is of dimension 1.
	Note that there are many known examples satisfying the assumption: for instance, whenever $M$ contains no closed incompressible surface \cite[\S 2.4]{cooper1994plane}.
	
	In \cite{porti1997torsion} Porti defined the \emph{adjoint torsion}, denoted by $\tau_\gamma$, as a function on a Zariski open subset of $\mathcal{X}^\mathrm{irr}(M)$ depending on a choice of a boundary curve $\gamma$. Here a boundary curve means a simple closed curve in $\partial M$ with a non-trivial class in $H_1(\partial M;\Zbb)$.
	Roughly speaking, at the character $\chi_\rho$ of an irreducible representation $\rho:\pi_1(M)\rightarrow \G{2}$ the value $\tau_{\gamma}(\chi_\rho)$  is the sign-refined Reidemeister torsion twisted by the adjoint action associated to $\rho$. 
	The definition involves the choice of a boundary curve $\gamma$ so as to specify a basis of the twisted (co-)homology.
	We briefly recall the definition in Section \ref{sec:rev}.

	Since Witten's monumental paper \cite{witten1989quantum}, there have been various studies on adjoint torsion in terms of quantum field theory. 
	Several recent studies \cite{gang2019precision, benini2019rotating, gang2019adjoint} on the relation with the Witten index suggested the following conjecture.
	\begin{conjecture}\label{conj:main}
		Suppose that every component of $\mathcal{X}^\mathrm{irr}(M)$ is of dimension 1 and that the interior of $M$ admits a hyperbolic structure of finite volume.
		Then for any boundary curve $\gamma \subset \partial M$ we have
		\begin{equation*}
		\sum_{\chi_\rho \in \mathrm{tr}_\gamma^{-1}(C)} \frac{1} {\tau_{\gamma}(\chi_\rho)} =0
		\end{equation*}
		for generic $C\in\Cbb$ where $\mathrm{tr}_\gamma : \mathcal{X}^\mathrm{irr}(M)\rightarrow \Cbb$ is the trace function of $\gamma$.
	\end{conjecture}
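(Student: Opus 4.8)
The plan is to realize the sum as the total residue of a single meromorphic $1$-form on (the smooth projective model of) each $1$-dimensional component of $X^{\mathrm{irr}}(M)$, and then to invoke Jacobi's residue theorem, i.e. the global residue theorem on a compact Riemann surface. Fix a $1$-dimensional component $C_0\subset X^{\mathrm{irr}}(M)$ and let $\widetilde{C_0}$ be its smooth projective model, on which $\mathrm{tr}_\gamma$ extends to a finite branched cover $\widetilde{C_0}\to\mathbb{P}^1$. The first structural input is Porti's change-of-curve formula \cite{porti1997torsion}: for two boundary curves $\gamma_1,\gamma_2$ the adjoint torsions differ by the ratio of the differentials of the corresponding trace functions, so that
\[
\omega_0 := \frac{d\,\mathrm{tr}_\gamma}{\mathbb{T}_\gamma}
\]
is, up to sign, a \emph{canonical} meromorphic $1$-form on $\widetilde{C_0}$, independent of the chosen $\gamma$. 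Concretely, when $C_0$ is presented as a plane curve $\{f=0\}$ one has $\mathbb{T}_\gamma=\pm J_\gamma$ up to a unit, where $J_\gamma=\det\,\partial(f,\mathrm{tr}_\gamma)/\partial(x,y)$, and then $\omega_0$ is nothing but the Poincar\'e residue $\pm\,dx/(\partial f/\partial y)$, which is manifestly curve-independent. This is what lets one differential encode all summands simultaneously.

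Next I would fix generic $C\in\Cbb$ and set $\omega := \omega_0/(\mathrm{tr}_\gamma-C)$. For generic $C$ the fiber $\mathrm{tr}_\gamma^{-1}(C)$ consists of finitely many smooth, unramified characters, so at such $p$ the function $\mathrm{tr}_\gamma-C$ is a local coordinate and
\[
\mathrm{Res}_p\,\omega = \frac{1}{\mathbb{T}_\gamma(\chi_p)}.
\]
Thus the conjectural sum equals $\sum_{p\in\mathrm{tr}_\gamma^{-1}(C)}\mathrm{Res}_p\,\omega$. Since $\widetilde{C_0}$ is compact, the residue theorem gives $\sum_{q\in\widetilde{C_0}}\mathrm{Res}_q\,\omega=0$, and Conjecture \ref{conj:main} is reduced to proving that the residues of $\omega$ at all remaining points vanish. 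These remaining points are of two types: (i) the zeros of $\mathbb{T}_\gamma$ on the locus of genuine characters, where $\omega_0$ has poles; and (ii) the ideal points of $\widetilde{C_0}$, namely the points added in the compactification (in particular those over $\mathrm{tr}_\gamma=\infty$) that do not correspond to irreducible characters of $M$.

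For type (i) I would appeal to Porti's description of the vanishing locus of the torsion: the zeros of $\mathbb{T}_\gamma$ sit only at the non-generic characters excluded from its domain (reducible or boundary-parabolic loci, bifurcation points), where the local residue is computable and, once summed over the whole component, is expected to cancel. The genuinely hard step is type (ii). Controlling $\mathrm{Res}_q\,\omega$ at an ideal point $q$ requires the \emph{joint} asymptotics of $\mathbb{T}_\gamma$ and of $\mathrm{tr}_\gamma$ as $\chi\to q$: one must weigh the order of the pole of $\omega_0$ against the order of growth of $\mathrm{tr}_\gamma-C$. In Culler--Shalen language this relates the torsion to the valuation data (boundary slopes and the associated splitting of $M$, in the spirit of \cite{cooper1994plane}) attached to $q$. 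A clean vanishing would follow from a structural statement of the form ``$\omega_0$ is holomorphic, or has only controlled poles, at every ideal point.'' In the plane-curve model this is exactly a Jacobi-type degree inequality: writing the component as a complete intersection $\{f_1=\dots=f_n=0\}$ and identifying $\mathbb{T}_\gamma$ (up to a unit) with the Jacobian of this system together with $\mathrm{tr}_\gamma-C$, the identity $\sum 1/\mathbb{T}_\gamma=0$ is precisely Jacobi's residue theorem applied to the constant numerator $h\equiv1$, valid as soon as $\deg h\le\sum_i\deg f_i-n-1$.

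I expect the main obstacle to be exactly this last degree/asymptotic condition at the ideal points. For an explicit family one can make the character variety, the trace functions, and the torsion fully concrete and verify the Jacobi degree bound (equivalently, the vanishing of the ideal-point residues) by direct computation; this is what renders the hyperbolic twist knot case tractable. In full generality, however, there is at present no uniform control of the adjoint torsion near arbitrary ideal points of an arbitrary component, so while the residue-theoretic skeleton above reduces Conjecture \ref{conj:main} to a concrete vanishing statement about torsion asymptotics, carrying out that final step for \emph{all} $M$ satisfying the hypotheses is the crux, and is precisely why the statement stands as a conjecture rather than a theorem outside of explicit families.
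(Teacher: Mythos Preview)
The statement is a conjecture, and the paper does not prove it in general; it establishes only the special case of hyperbolic twist knot exteriors (Theorem~\ref{thm:main}). Your proposal is openly a program rather than a proof: you lay out the residue-theoretic skeleton and correctly identify the missing ingredient---control of the torsion's asymptotics at ideal points, equivalently a Jacobi-type degree inequality---as precisely the reason the general statement remains open. This matches the paper's own remark (Section~1.2) that even for twist knots a direct appeal to global residue results such as \cite{vidras2001some} would require checking a properness condition that ``seems to require heavy complex analysis.''

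For twist knots the paper takes a concrete detour rather than your structural route: it (i) computes $\mathbb{T}_\gamma$ explicitly as a Jacobian in coordinates $(m,z)$ on $\Cbb^\ast\times\Cbb$ (Theorem~\ref{thm:main2}), (ii) uses $F(m,z)=0$ to eliminate $m$ and reduce the two-variable residue sum to a one-variable sum of the form $\sum_{H(z)=0} g(z)/H'(z)$, and (iii) verifies the Jacobi degree inequality $\deg g\le\deg H-2$ by direct degree-counting in Chebyshev polynomials (Lemmas~\ref{lem:deg} and~\ref{lem:deg2}). Hyperbolicity enters only in step (iii): for $n=1$ (the trefoil) the degree bound fails and the sum is nonzero. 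One technical point in your sketch deserves correction: Porti's change-of-curve formula makes $d\log E_\gamma/\mathbb{T}_\gamma$ curve-independent, not $d\,\mathrm{tr}_\gamma/\mathbb{T}_\gamma$; the extra factor $E_\gamma-E_\gamma^{-1}$ is constant on a fixed trace-fiber (so the residue computation there is unaffected) but it does contribute at ideal points, which is exactly where your argument is already incomplete.
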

	
%
	A knot in $S^3$ with a diagram as in Figure \ref{fig:twist_knot} is called a \emph{twist knot}. We denote by $K_n$ for $n \neq 0 \in \Zbb$ the twist knot having $|n|$ right-handed half twists in the box (left-handed, if $n$ is negative). We may focus on twist knots $K_{2n}$ since $K_{2n+1}$ is equivalent to the mirror image of $K_{-2n}$.
	\begin{figure}[!h]
		\begin{center}
			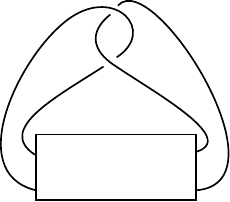
			\caption{A diagram of a twist knot.}
			\label{fig:twist_knot}
		\end{center}
	\end{figure}
	
	It is known that every knot exterior of $K_{2n}$  satisfies the assumptions in Conjecture~\ref{conj:main} except for $n=1$  (the trefoil knot).
	Namely, every twist knot $K_{2n}$ with knot exterior $M$ is hyperbolic except for $n=1$   \cite{menasco1984closed}  and has the character variety $\mathcal{X}^\mathrm{irr}(M)$ consisting of 1-dimensional components.
	The latter can be derived from explicit computations \cite{riley1984nonabelian,macasieb2011character} or the fact that every twist knot exterior contains no closed incompressible surface \cite{hatcher1985incompressible}. Main aim of this paper is to prove that Conjecture \ref{conj:main} holds for all hyperbolic twist knots.

	\begin{theorem} \label{thm:main} Let $M$ be the knot exterior of the twist knot $K_{2n}$ for $n \neq 0,1$ and  $\gamma \subset \partial M$ be a boundary curve. Then we have 
		\begin{equation*}
		\sum_{\chi_\rho \in \mathrm{tr}_\gamma^{-1}(C)} \dfrac{1}{\tau_\gamma(\chi_\rho)} =0
		\end{equation*}
		for generic $C \in \Cbb$.
	\end{theorem}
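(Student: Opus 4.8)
The plan is to reduce the identity to Jacobi's residue theorem, used in the form of the Euler--Jacobi vanishing relation: if $F,G\in\Cbb[x,y]$ have coprime leading forms, so that $V(F)\cap V(G)$ consists of $\deg F\cdot\deg G$ transverse points all lying in $\Cbb^2$, then $\sum_{p\in V(F)\cap V(G)} H(p)/\mathrm{Jac}(F,G)(p)=0$ for every $H\in\Cbb[x,y]$ with $\deg H\le \deg F+\deg G-3$, where $\mathrm{Jac}(F,G)=F_xG_y-F_yG_x$. The first task is therefore to realise $X^{\mathrm{irr}}(M)$, the level set $\mathrm{tr}_\gamma^{-1}(C)$, and $\mathbb{T}_\gamma$ in these polynomial terms. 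Using the two-bridge presentation $\pi_1(M)=\langle a,b\mid \cdots\rangle$ with meridional generators $a,b$, every irreducible character is recorded by $x=\mathrm{tr}\,\rho(a)$ together with a second trace coordinate $y$, and $X^{\mathrm{irr}}(M)$ is cut out in $\Cbb^2$ by the Riley polynomial $\phi_n(x,y)$; for twist knots $\phi_n$ is explicit, obeys a Chebyshev-type recursion in $n$, and has total degree $d_n$ growing linearly in $|n|$. I would begin by recording $\phi_n$, checking that it is reduced and that $V(\phi_n)$ is the closure of the irreducible character locus, with $\mathbb{T}_\gamma$ and $\mathrm{tr}_\gamma$ both regular away from finitely many of its points; if $\phi_n$ happens to be reducible one argues component by component.

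The heart of the matter is to write $1/\mathbb{T}_\gamma$ as a polynomial divided by $\mathrm{Jac}$. Let $\mu$ be the meridian. A computation in the spirit of the known adjoint (non-abelian) Reidemeister torsion formulas for two-bridge knots should give $\mathbb{T}_\mu=\pm\,\partial_y\phi_n/w_n$ on $V(\phi_n)$ for an explicit polynomial $w_n$ of low degree (conjecturally a constant, as it is for the figure-eight knot). Porti's change-of-curve formula, $\mathbb{T}_\gamma=\pm(\partial\mathrm{tr}_\gamma/\partial\mathrm{tr}_\mu)\,\mathbb{T}_\mu$, combined with implicit differentiation along $\phi_n=0$ — using $\mathrm{tr}_\mu=x$, so that $\partial\mathrm{tr}_\gamma/\partial\mathrm{tr}_\mu=\mathrm{Jac}(\mathrm{tr}_\gamma,\phi_n)/\partial_y\phi_n$ — then collapses the $\partial_y\phi_n$ factors to give, with the \emph{same} $w_n$ for every boundary curve,
\begin{equation*}
\mathbb{T}_\gamma \;=\; \pm\,\frac{\mathrm{Jac}(\mathrm{tr}_\gamma,\phi_n)}{w_n}, \qquad\text{hence}\qquad \frac{1}{\mathbb{T}_\gamma}\;=\;\pm\,\frac{w_n}{\mathrm{Jac}(\mathrm{tr}_\gamma,\phi_n)}\quad\text{on } V(\phi_n).
\end{equation*}
For a slope $\gamma=p\mu+q\lambda$ one expands $\mathrm{tr}_\gamma$ as a polynomial in $\mathrm{tr}_\mu,\mathrm{tr}_\lambda,\mathrm{tr}_{\mu\lambda}$, and the longitudinal trace functions of twist knots are again explicitly computable; write $e_{n,\gamma}=\deg\mathrm{tr}_\gamma$, which is large as soon as $|q|$ is large.

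Putting the pieces together, for generic $C\in\Cbb$ one gets
\begin{equation*}
\sum_{\chi_\rho\in\mathrm{tr}_\gamma^{-1}(C)}\frac{1}{\mathbb{T}_\gamma(\chi_\rho)}\;=\;\pm\!\sum_{p\,\in\,V(\phi_n)\cap V(\mathrm{tr}_\gamma-C)}\frac{w_n(p)}{\mathrm{Jac}(\mathrm{tr}_\gamma-C,\,\phi_n)(p)},
\end{equation*}
and the Euler--Jacobi relation applied to the pair $(\phi_n,\,\mathrm{tr}_\gamma-C)$ makes the right-hand side vanish once its hypotheses hold and $\deg w_n\le d_n+e_{n,\gamma}-3$. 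Genericity of $C$ secures most of the hypotheses: for $C$ off a finite set the level set avoids the reducible characters, the points where $\mathbb{T}_\gamma$ or the coordinates degenerate, and the zero locus of $\mathrm{Jac}$; the intersection is transverse with exactly $d_ne_{n,\gamma}$ points (here one uses that $\mathrm{tr}_\gamma$ is non-constant on every component of $V(\phi_n)$, a consequence of hyperbolicity); and, crucially and uniformly in $C$, all these points lie in $\Cbb^2$, which one checks by verifying that the leading form of $\phi_n$ is coprime to that of $\mathrm{tr}_\gamma$, so that there are no common zeros at infinity and hence no stray contributions from ideal points of the character variety.

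The genuinely hard part is the interlocking pair feeding the Euler--Jacobi step: first, pinning down the polynomial $w_n$ in $1/\mathbb{T}_\gamma=\pm w_n/\mathrm{Jac}(\mathrm{tr}_\gamma,\phi_n)$ — this is where the recursive structure of twist-knot groups and careful bookkeeping of Porti's sign-refined normalisation are needed, and one wants $\deg w_n$ as small as possible; and second, establishing $\deg w_n\le d_n+e_{n,\gamma}-3$ for \emph{all} $n\ne0,1$ and \emph{all} boundary slopes simultaneously. The tightest cases are the small ones — $|n|$ small, and $\gamma$ with small $|q|$, essentially $\gamma=\mu$ (where $e_{n,\gamma}=1$) and $\gamma=\lambda$ — for which the right-hand side is minimal; since the degrees of $\phi_n$ and of the longitudinal traces grow linearly in $|n|$, the inequality should hold with room to spare once $|n|$ is not too small, leaving only finitely many genuinely small $(n,\gamma)$ to be checked by hand. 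A secondary obstacle is the behaviour at infinity: confirming from the explicit leading forms that $V(\phi_n)$ meets no trace level set at infinity, so that the residue sum really does localise on the affine fibre.
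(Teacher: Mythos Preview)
Your overall strategy matches the paper's: express $\mathbb{T}_\gamma$ as a Jacobian (the paper proves $\mathbb{T}_\gamma=-\tfrac{m}{2E_\gamma}\det\frac{\partial(F,E_\gamma)}{\partial(m,z)}$, and for $\gamma=\mu$ indeed $\mathbb{T}_\mu=\tfrac12\partial F/\partial z$, so your $w_n$ is essentially constant), then interpret the sum as a global residue. The gap is in the step you call a ``secondary obstacle''; it is in fact the main obstacle, and it does not go through as stated.

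The coprime-leading-forms hypothesis of the two-variable Euler--Jacobi relation fails. In trace coordinates $(x,z)$ with $x=m+m^{-1}$ the defining polynomial is
\[
F \;=\; S_n(S_n-S_{n-1})(x^2-2)-(z-1)S_n^2+S_{n-1}^2,
\]
whose leading form (total degree $2n$, for $n>1$) is $z^{2n-2}x^2$, vanishing at both $[1{:}0{:}0]$ and $[0{:}1{:}0]$. For $\gamma=\lambda$ one finds $\mathrm{tr}_\lambda=l+l^{-1}$ with top term $-(z-2)S_n^2\,x^4$, so its leading form $-z^{2n-1}x^4$ vanishes at the \emph{same} two points. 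Thus $V(\phi_n)$ and $V(\mathrm{tr}_\lambda-C)$ always meet at infinity in $\mathbb{P}^2$, and for general slopes the situation is no better. The paper explicitly flags this: in Section~1.2 and Remark~\ref{rmk:question} the author notes that the required properness condition for the two-variable global residue theorem (even in its more flexible toric form) could not be verified directly.

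The paper's workaround is to \emph{eliminate a variable}. Using $F=0$ to write $m^{2k}=h_k(z)m^2-h_{k-1}(z)$, one reduces $G=E_\gamma-c$ on $V(F)$ to $\alpha(z)m^2-\beta(z)$ (for $p$ even) or $\alpha(z)m-\beta(z)m^{-1}-c$ (for $p$ odd), and after a further elimination obtains a single rational function $H(z)$ whose zeros parametrize $Z_{F,G}$. A determinant lemma then shows that $\det\frac{\partial(F,G)}{\partial(m,z)}$ is a simple multiple of $H'(z)$, so the whole sum collapses to $\sum_{H(z)=0}P(z)/H'(z)$ for an explicit rational $P$. Now the \emph{one}-variable Jacobi residue theorem applies, and the only thing left to check is a degree inequality (Lemmas~\ref{lem:deg} and~\ref{lem:deg2}), verified directly from the Chebyshev recursions; it holds precisely when $n\neq 0,1$, which is where hyperbolicity enters. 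Your instinct about a degree bound is right, but it has to be established \emph{after} the reduction to one variable, not in $\mathbb{P}^2$.
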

	
	The adjoint torsion is quite hard to compute in general and its concrete computation is known only in a few examples.
	For twist knots the adjoint torsion $\tau_\lambda$ with respect to the canonical longitude $\lambda$ was first computed in \cite{dubois2009non} by using the relation with the twisted Alexander polynomial \cite{yamaguchi2008relationship} and the computation were remarkably simplified in \cite{tran2014twisted} (also in \cite{tran2018twisted}). 
	To prove Theorem \ref{thm:main}, we extend the computation  to an arbitrary boundary curve.
	We summarize our computation here for convenience of the reader. 
	We refer to Section \ref{sec:comp} for details.
	
	Let $S_k(z)$ be the Chebyshev polynomials defined by $S_{k+1}(z) = z S_k(z) - S_{k-1}(z)$ for all $k \in \Zbb$ with the initial condition $S_0(z)=0$, $S_1(z)=1$. Throughout the paper, the Chebyshev polynomials are always given in the variable $z$ and thus we often write $S_k(z)$ simply as $S_k$. 
	We fix $n \neq 0 \in \Zbb$ and let $M$ be the knot exterior of the twist knot $K_{2n}$.
	Reformulating \cite{riley1984nonabelian}, the character variety $\mathcal{X}^\mathrm{irr}(M)$ is given by the zero set of
	\[F(m,z)=S_{n} (S_{n}-S_{n-1})(m^2+m^{-2})-zS_{n} (S_{n}-S_{n-1})+1\]
	in $\Cbb^\ast \times \Cbb$ with the quotient identifying $(m,z)$ and $(m^{-1},z)$.
	For a boundary curve $\gamma \subset \partial M$ we let
	\begin{equation*}
	E_\gamma(m,z) = m^p \left( -\dfrac{(z-2) (S_{n+1}-S_{n-1})S_{n}^2}{S_{n}-S_{n-1}} m^2+ (z-2)(S_{n}+S_{n-1})S_{n}+1\right)^q
	\end{equation*}
	where $p/q \in \mathbb{Q} \cup \{\infty\}$ is the slope of $\gamma$.
	\begin{theorem} \label{thm:main2} Let $(m,z)\in\Cbb^\ast \times \Cbb$ be a solution to the Laurent polynomial $F$ and $\chi_\rho$ be the corresponding character. Then $E_\gamma(m^{\pm1},z)$ are  eigenvalues of $\rho(\gamma)$ and
		\begin{equation}\label{eqn:main2}
		\tau_\gamma(\chi_\rho) = -\dfrac{m}{2E_\gamma} \det \left( \dfrac{\partial(F,E_\gamma)}{\partial(m,z)}\right) 
		\end{equation} if $\chi_\rho$ is $\gamma$-regular. 
	\end{theorem}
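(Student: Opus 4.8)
The plan is to read off both the eigenvalues of $\rho(\gamma)$ and the torsion $\mathbb{T}_\gamma$ from Riley's two--generator presentation of the twist knot group, and then repackage the answer in the Jacobian form \eqref{eqn:main2}. Write $\pi_1(M)=\langle a,b \mid wa=bw\rangle$ for the standard presentation of $K_{2n}$, where $a$ is a meridian and $w=w(a,b)$ is the relevant palindromic word (a power of a commutator in $a,b$), and use Riley's normalization $\rho(a)=\bigl(\begin{smallmatrix} m & 1\\ 0 & m^{-1}\end{smallmatrix}\bigr)$, $\rho(b)=\bigl(\begin{smallmatrix} m & 0\\ 2-z & m^{-1}\end{smallmatrix}\bigr)$ with $z$ the trace coordinate appearing in $F$. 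Reformulating \cite{riley1984nonabelian}, the matrix equation $wa=bw$ collapses to the single scalar equation $F(m,z)=0$, which is exactly how $X^{\mathrm{irr}}(M)$ is presented in the excerpt. Throughout, the basic computational tool is the Cayley--Hamilton identity $A^{k}=S_{k}(\mathrm{tr}\,A)\,A-S_{k-1}(\mathrm{tr}\,A)\,I$ for $A\in\G{2}$, which reduces every power $\rho(w)^{n}$ appearing below to a Chebyshev--linear combination of $\rho(w)$ and $I$.

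For the eigenvalue statement, recall that the canonical longitude of a twist knot has the form $\lambda=\overleftarrow{w}\,w\,a^{-2e}$, where $\overleftarrow{w}$ is $w$ with the roles of $a$ and $b$ interchanged and $e\in\Zbb$ is chosen so that $\lambda$ is null--homologous. Since $\rho(\lambda)$ commutes with $\rho(a)$ it is upper triangular, so its $(1,1)$--entry is an eigenvalue; computing that entry with Cayley--Hamilton, converting $\mathrm{tr}\,\rho(w)$ into the variable $z$, and simplifying with the recursion $S_{k+1}=zS_{k}-S_{k-1}$ yields precisely the parenthetical factor of \eqref{eqn:eigen}; denote it $L(m,z)$. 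Because $\rho(\mu)=\rho(a)$ and $\rho(\lambda)$ share a common eigenvector and $\gamma=\mu^{p}\lambda^{q}$, the eigenvalues of $\rho(\gamma)$ are $m^{\pm p}L(m^{\pm1},z)=E_\gamma(m^{\pm1},z)$. In particular the identity $E_\gamma(m,z)E_\gamma(m^{-1},z)=1$ on $\{F=0\}$ is forced by $\det\rho(\gamma)=1$ (alternatively one can verify it directly, substituting the value of $m^{2}+m^{-2}$ read off from $F=0$).

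For the torsion formula I would invoke Porti's change--of--curve formula \cite{porti1997torsion}: for a character $\chi_\rho$ on the $1$--dimensional curve $X^{\mathrm{irr}}(M)$ that is both $\gamma$-- and $\gamma'$--regular,
\[
\frac{\mathbb{T}_\gamma(\chi_\rho)}{\mathbb{T}_{\gamma'}(\chi_\rho)}=\pm\,\frac{d\log E_\gamma}{d\log E_{\gamma'}}\bigg|_{X^{\mathrm{irr}}(M)},
\]
so $\mathbb{T}_\gamma=\kappa\cdot d\log E_\gamma|_{X^{\mathrm{irr}}(M)}$ for a single factor $\kappa$ independent of $\gamma$. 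Restricting differentials to $\{F=0\}$, for any function $G$ one has $dG|_{X^{\mathrm{irr}}(M)}\propto G_{m}F_{z}-G_{z}F_{m}$ with a proportionality constant independent of $G$; hence $d\log E_\gamma|_{X^{\mathrm{irr}}(M)}\propto\frac{1}{E_\gamma}\det\bigl(\partial(F,E_\gamma)/\partial(m,z)\bigr)$. It then remains to pin down $\kappa$, which I would do by evaluating the identity for one known curve: taking $\gamma'=\lambda$ and using the closed form of $\mathbb{T}_\lambda$ for twist knots from \cite{dubois2009non,tran2014twisted} (obtained there via Yamaguchi's relation \cite{yamaguchi2008relationship} between the adjoint torsion and the twisted Alexander polynomial) together with the value of $L$ above, the factors $E_\lambda$ and $\det(\partial(F,E_\lambda)/\partial(m,z))$ cancel and leave $\kappa=\tfrac{m}{2}F_z$ (equivalently the proportionality constant above is $1/F_{z}$); substituting back reproduces the right--hand side of \eqref{eqn:main2}. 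This last cancellation is a Chebyshev--polynomial identity, verified with the recursion for $S_{k}$ and the addition formula $S_{j+k}=S_{j+1}S_{k}-S_{j}S_{k-1}$. (One could also bypass $\mathbb{T}_\lambda$ altogether and compute $\mathbb{T}_\gamma$ directly: the adjoint chain complex of the two--generator presentation is $0\to\g{2}\to\g{2}^{2}\to\g{2}\to 0$, and Porti's sign--refined torsion with the $H_{*}$--bases built from $\gamma$ and the $\mathrm{Ad}\,\rho(\partial M)$--invariant vector is a quotient of two $3\times3$ Fox--calculus minors, which the same manipulations turn into \eqref{eqn:main2}.)

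The main obstacle is twofold. First, signs: Porti's torsion is sign--refined, so the ambiguous $\pm$ in the change--of--curve formula and the orientations of the $H_{*}$--bases must be tracked carefully to see that the sign in \eqref{eqn:main2} is the stated one --- this is where most of the care, as opposed to computation, goes. Second, once the structural identity $\mathbb{T}_\gamma=\kappa\,d\log E_\gamma|_{X^{\mathrm{irr}}(M)}$ is in place, matching it to the explicit right--hand side of \eqref{eqn:main2} is a genuinely intricate (though elementary) manipulation of Chebyshev polynomials and their $z$--derivatives. Finally, the $\gamma$--regularity hypothesis must be located precisely: it is exactly the condition that $E_\gamma$ is a well--defined local coordinate near $\chi_\rho$ and that $\det(\partial(F,E_\gamma)/\partial(m,z))$ does not vanish there, which needs to be checked against the description of the singular locus of $\{F=0\}$ coming from Riley's work.
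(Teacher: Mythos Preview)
Your strategy is correct and matches the paper's: the eigenvalue claim is Lemma~2.3 there (direct computation of the $(1,1)$-entry of $\rho(\lambda)$ via Cayley--Hamilton/Chebyshev), and the torsion formula is Theorem~2.5, proved exactly as you propose by expanding the Jacobian, invoking Porti's change-of-curve relation $\mathbb{T}_\gamma=\mathbb{T}_\lambda\bigl(p\,\tfrac{l}{m}\tfrac{dm}{dl}+q\bigr)$, and matching against Tran's closed form for $\mathbb{T}_\lambda$ via a Chebyshev identity in $z$ alone (the paper's equation~(2.13), checked with Mathematica). Two concrete details to correct before you carry out the computation: in Riley's normalization the lower-left entry of $\rho(b)$ is $-u$ with $u=(z-2)S_n/(S_n-S_{n-1})$, not $2-z$, and for $K_{2n}$ the canonical longitude is $\lambda=w_\ast^{\,n}w^{n}$ (with $w_\ast$ the letter-reversal of $w$ and no extra meridian power), not $\overleftarrow{w}\,w\,a^{-2e}$.
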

	\begin{remark}
		Choosing a boundary curve $\gamma$ as a meridian $\mu$, we have $E_\gamma =m$ and  the equation \eqref{eqn:main2} reduces to
		\[\tau_\mu (\chi_\rho)= \dfrac{1}{2} \frac{\partial F}{\partial z}.\]	
		It is interesting that the adjoint torsion with respect to a meridian is related to a derivative of the defining equation of the character variety. A similar observation in terms of the A-polynomial was pointed out in \cite[Remark 4.5]{dimofte2013quantum}.
	\end{remark}
	
	\subsection{Global residue}
	We remark some relation of Theorem \ref{thm:main} with the global residue theorem. Recall that the global residue theorem says that any top-dimensional meromorphic form defined on a compact complex manifold has global residue zero.  Here global residue means the total sum of local residues. We refer to \cite{griffiths1978principles, tsikh1992multidimensional} for general references on residue theory.
	
	Theorem \ref{thm:main2} gives us that for generic $c \in \Cbb^\ast$  the level set $\mathrm{tr}_\gamma^{-1}(c+\frac{1}{c})$ is the common zero set $Z_{F,G}$ of $F(m,z)$ and $G(m,z):=E_\gamma(m,z)-c$ in $\Cbb^\ast \times \Cbb$ and 
	\[\sum_{\chi_\rho \in \mathrm{tr}_\gamma^{-1}(c+\frac{1}{c})} \frac{1} {\tau_\gamma(\chi_\rho)} =  \sum_{(m,z) \in Z_{F,G}} \,\dfrac{-2c}{m\, \det \left( \dfrac{\partial(F,G)}{\partial(m,z)}\right)}.
	\]  
	The above equation implies that Theorem \ref{thm:main} is equivalent to that the global residue of the meromorphic 2-form
	\[\omega = \dfrac{1}{F\cdot G} \frac{dm}{m} \wedge dz\]  defined on $\Cbb^\ast \times \Cbb$ is zero for generic $c\in\Cbb^\ast$.
	
	If $F$ and $G$ are generic in some sense, then there exists a toric compactification of $\Cbb^\ast \times \Cbb$ so that one can deduce that the global residue of $\omega$ is zero, as an application of the global residue theorem. We refer to \cite{khovanskii1978newton, vidras2001some} for precise conditions of genericness; the condition in \cite{vidras2001some} relaxes that in \cite{khovanskii1978newton}.
	Unfortunately, we however do not know a direct way to obtain Theorem \ref{thm:main} from \cite{vidras2001some}. Checking the generic condition in \cite{vidras2001some} seems to require heavy complex analysis  (see Remark \ref{rmk:question}).
	We thus take a practical detour to prove Theorem \ref{thm:main} by reducing the problem to a one-variable problem. We refer to Section \ref{sec:three} for details.

	\subsection*{Acknowledgment}
	The author is grateful to Jinsung Park and Anh Tran for their valuable discussions and comments. He also thanks the anonymous referee for careful reading.
	The author was supported by a KIAS Individual Grant (MG073801) at Korea
	 Institute for Advanced Study and Basic Science Research Program through the NRF of Korea funded by the Ministry of Education (2020R1A6A3A03037901).

	\section{Adjoint Reidemeister torsion of twist knots} \label{sec:two}
	
	\subsection{A brief review on definitions} \label{sec:rev}

	We briefly recall a definition of the adjoint torsion for a knot exterior. We refer to \cite{porti1997torsion,turaev2002torsions,dubois2003torsion} for more details.

	Let $C_\ast=(0 \rightarrow C_n \rightarrow \cdots \rightarrow C_0\rightarrow 0)$ be a chain complex of vector spaces over a field $\Fbb$ with a boundary map $\partial$.
	For a basis $c_\ast$ of $C_\ast$ and a basis  $h_\ast$  of the homology $H_\ast(C_\ast)$, the \emph{sign-refined Reidemeister torsion} $\mathrm{Tor}(C_\ast,c_\ast, h_\ast)$ is defined as follows.
	For each $0 \leq i \leq n$ we choose a tuple $b_i$ of vectors in $C_i$ such that $\partial b_i$ is a basis of $\partial C_i$, and a representative $\widetilde{h}_i$ of $h_i$ in $C_i$.
	Then one can check that a tuple $c'_i=(\partial b_{i+1}, \widetilde{h_i}, b_i)$ is a basis of $C_i$.
	Letting $A_i$ be the transition matrix taking the basis $c_i$ to the other basis $c'_i$, we have
	\begin{equation}\label{eqn:torsion}
	\mathrm{Tor}(C_\ast,c_\ast, h_\ast) :=  (-1)^{\sum_{j=0}^n \alpha_j \beta_j}\prod_{i=0}^n  \det A_i^{(-1)^{i+1}	} \in \Fbb^\ast
	\end{equation}
	where $\alpha_j = \sum_{i=0}^j \dim C_i$ and $\beta_j = \sum_{i=0}^j \dim H_i(C_\ast)$. Note that the equation \eqref{eqn:torsion} does not depend on the choices of $b_\ast$ and $\widetilde{h}_\ast$.
	
	Let $M$ be the knot exterior of a knot $K \subset S^3$. We fix any triangulation of $M$ and an orientation of each cell so that the cells, say $c_1,\cdots,c_m$, form a basis of $C_\ast(M;\Rbb)$. 
	It is well-known that $\dim H_i(M;\Rbb) =1$ for $i=0,1$ and $\dim H_i(M;\Rbb) =0$ otherwise. We choose a basis $h_\ast$ of $H_\ast(M;\Rbb)$ as  $h_\ast=\{[\mathrm{pt}],[\mu]\}$ where $\mathrm{pt}$ is a point in $M$ and $\mu$ is a meridian of $K$. Let
	\begin{equation}\label{eqn:tau}
	\epsilon= \mathrm{sgn} \left( \mathrm{Tor}(C_\ast(M;\Rbb),c_\ast,h_\ast) \right) \in  \{ \pm1\}
	\end{equation}
	where $\mathrm{sgn}(x)$ denotes the sign of $x \in \Rbb^\ast$.
	
	Let $\Mt$ be the universal cover of $M$ with the induced triangulation and $\rho :\pi_1(M)\rightarrow \G{2}$ be an irreducible representation.
	Viewing the Lie algebra $\mathfrak{g}$ of $\G{2}$ as a $\Zbb[\pi_1(M)]$-module through the adjoint representation $\mathrm{Ad}_\rho : \pi_1(M)\rightarrow \mathrm{Aut}(\mathfrak{g})$ associated to $\rho$, we consider a chain complex
	\[ C_*(M;\mathfrak{g}) = \mathfrak{g} \otimes_{\Zbb[\pi_1(M)]} C_*(\Mt;\Zbb)\]
	with a basis $\mathcal{C} = \{h,e,f\} \otimes \{\widetilde{c}_1 ,\cdots, \widetilde{c}_m \}$.
	Here  $\{h,e,f\}$ is a basis of $\mathfrak{g}$ and $\widetilde{c}_i$ is any lift of $c_i$ to $\Mt$.
	The chain complex $C_\ast(M;\mathfrak{g})$ usually has non-trivial homology $H_\ast(M;\mathfrak{g})$.
	We choose a boundary curve $\gamma \subset \partial M$ and fix a basis $\mathcal{H}$ of $H_\ast(M;\mathfrak{g})$ as follows under the assumption that $\rho$ is \emph{$\gamma$-regular} \cite{porti1997torsion}, i.e.,
	\begin{itemize}
		\item $\dim  H_1(M;\mathfrak{g})=1$; 
		\item the inclusion $\gamma \hookrightarrow M$  induces an epimorphism $H_1(\gamma;\mathfrak{g}) \rightarrow H_1(M;\mathfrak{g})$;
		\item if $\mathrm{tr} (\rho(\pi_1(\partial M)))  \subset \{ \pm 2\}$, then $\rho(\gamma) \neq \pm\mathrm{Id}$.
	\end{itemize} 
	\begin{remark} \label{rmk:regular} The $\gamma$-regularity is invariant under conjugating $\rho$ and thus the notion of $\gamma$-regular character is well-defined. Most of irreducible characters are $\gamma$-regular. Precisely, non-$\gamma$-regular irreducible characters are contained in the
		zero set of the differential of the trace function $\mathrm{tr}_\gamma : \mathcal{X}^\mathrm{irr}(M)\rightarrow \Cbb$. See \cite[Proposition 3.26]{porti1997torsion} and  \cite[Remark 9]{dubois2009non}. 
	\end{remark}
	
	From the Poincare duality, we have $\dim H_i(M;\mathfrak{g})=1$ for $i=1,2$ and $\dim H_i(M;\mathfrak{g})=0$ otherwise.
	We choose any non-zero element $v \in \mathfrak{g}$ invariant under  $\mathrm{Ad}_{\rho}(g)$ for all $g \in \pi_1(\partial M)$ and let
	$\mathcal{H}$ consist of the images of $v \otimes \gamma$ and $v \otimes \partial M$ under the canonical  maps $H_1(\gamma;\mathfrak{g}) \rightarrow H_1(M;\mathfrak{g})$ and $H_2(\partial M;\mathfrak{g}) \rightarrow H_2(M;\mathfrak{g})$, respectively.
	We finally define
	the \emph{adjoint torsion} $\tau_\gamma(\chi_\rho)$ as
	\begin{equation*} 
	\tau_\gamma(\chi_\rho) =\epsilon \cdot \mathrm{Tor}(C_\ast(M;\mathfrak{g}), \mathcal{C}, \mathcal{H}) \in \Cbb^\ast.
	\end{equation*}
	\begin{remark}
		The definition of $\tau_\gamma(\chi_\rho)$ involves several choices: a triangulation of $M$, an order/orientations of the cells of $M$, lifts of the cells of $M$ to $\Mt$, a basis of $\mathfrak{g}$, and the vector $v$. However, it turns out that the adjoint torsion does not depend on these choices. Also, the definition of $\tau_\gamma(\chi_\rho)$ involves the choices of orientations of $K$, $\mu$, $\gamma$, and $\partial M$. If we reverse one of these, the sign of $\tau_\gamma(\chi_\rho)$ changes.
		We therefore fix these orientations once and for all.
		Note that choices of these orientations do not matter when we consider Conjecture \ref{conj:main}.	
	\end{remark}
	\subsection{Adjoint torsions of twist knots} \label{sec:comp}
	
	We fix $n \neq 0 \in \Zbb$ and let $M$ be the knot exterior of the twist knot $K_{2n}$. Let $\mathcal{X}^\mathrm{irr}(M)$ be the character variety of irreducible representations $\pi_1(M)\rightarrow \G{2}$. Recall that $\mathcal{X}^\mathrm{irr}(M)$, as a set, is the set of conjugacy classes of irreducible representations \cite{culler1983varieties}: 
	\[\mathcal{X}^\mathrm{irr}(M) = \left\{ \rho : \pi_1(M)\rightarrow \mathrm{SL}_2(\Cbb) : \mathrm{irreducible} \right\}/_\mathrm{Conjugation}.\]
	 It is well-known that the fundamental group of $M$ has a presentation
	\[\pi_1(M)= \langle a,b \, | \, w^n a = b w^n \rangle \quad \textrm{where } w=ba^{-1}b^{-1}a\]
	and that up to conjugation an irreducible representation $\rho:\pi_1(M)\rightarrow \mathrm{SL}_2(\Cbb)$ is given by
	\begin{equation}\label{eqn:conj}
	\rho(a) = \begin{pmatrix}
	m & 1 \\ 0 &m^{-1}
	\end{pmatrix} \quad  \textrm{and} \quad
	\rho(b) = \begin{pmatrix}
	m & 0 \\ -u &m^{-1}
	\end{pmatrix}
	\end{equation} where $(m,u) \in (\Cbb^\ast)^2$ is a zero of the \emph{Riley polynomial} $R(m,u)$ \cite{riley1984nonabelian}.
	It is also proved in \cite{riley1984nonabelian} that two zeros $(m_1,u_1)$ and $(m_2,u_2)$ of the Riley polynomial represent the same character if and only if $m_1= m_2^{\pm1}$ and $u_1=u_2$. It follows that 
	\[\mathcal{X}^\mathrm{irr}(M) = \{ (m,u) \in (\Cbb^\ast)^2 : R(m,u)=0\}/_{(m,u) \sim (m^{-1},u)}.\]
	
	Let $S_k(z) \in \Cbb[z]$ be the Chebyshev polynomials defined by $S_{k+1}(z) = z S_k(z) - S_{k-1}(z)$ for all $k \in \Zbb$ with the initial condition $S_0(z)=0$, $S_1(z)=1$. 
	Then the Riley polynomial is explicitly given as 
		\begin{equation} \label{eqn:e1}
		R(m,u)=S_{n+1}(z)-(u^2-(u+1)(m^2+m^{-2}-3)) S_{n}(z) \in \Cbb[m^{\pm1},u]
		\end{equation}
		where $z$ is the trace of $\rho(w)$
		\begin{equation} \label{eqn:e2}
		z = \mathrm{tr} \rho(w) = u^2+2u +2 -(m^2+m^{-2})u.
		\end{equation}
		We refer to  \cite[\S 2.2]{tran2018twisted} for details.
		Note that the index of the Chebyshev polynomials in this paper is slightly different  from  \cite{tran2018twisted}; $S_k(z)$ in this paper agrees with $S_{k+1}(z)$ defined in \cite{tran2018twisted}.
	In what follows, we write $S_k(z)$ simply as $S_k$ for notational convenience.
			Note that  the Chebyshev polynomials satisfy	$S_k^2 -  z S_k S_{k-1} + S_{k-1}^2=1$ for all $k \in \Zbb$.
	
	\begin{lemma} \label{lem:u} The variable $z$ determines the variable $u$  uniquely as
		\begin{equation} \label{eqn:u}
		u = \dfrac{(z-2)S_{n}}{S_{n}-S_{n-1}}.
		\end{equation}
	\end{lemma}
	\begin{proof} From the equation \eqref{eqn:e2} we have $m^2+m^{-2} =u+2+(2-z)u^{-1}$. Plugging it into the equation \eqref{eqn:e1}, we obtain
		\begingroup
		\allowdisplaybreaks
		\begin{align*}
		R(m,u)&=S_{n+1}-(u^2-(u+1)(m^2+m^{-2}-3)) S_{n}\\
		&=S_{n+1} - (u^2-(u+1)(u-1+(2-z)u^{-1} ) S_n\\
		&=S_{n+1} -(z-1 + (z-2)u^{-1} )S_n\\
		&=S_n-S_{n-1} -(z-2)S_nu^{-1}.
		\end{align*}
		\endgroup
		On the other hand, on the zero set of $R(m,u)$,
		both $S_{n}-S_{n-1}$ and $(z-2)S_{n}$ can not be zero, as they have no common zero. Therefore, $u$ is determined as in the equation \eqref{eqn:u}.
	\end{proof}

	Using Lemma \ref{lem:u}, one can use the variable $z$ instead of the variable $u$. In particular, we may describe $\mathcal{X}^\mathrm{irr}(M)$ in terms of variables $m$ and $z$ as follow.
	\begin{lemma} One has
	\[\mathcal{X}^\mathrm{irr}(M) = \left\{ (m,z) \in \Cbb^\ast \times \Cbb : F(m,z)=0 \right\}/_{(m,z) \sim (m^{-1},z)}\]
where
\begin{equation}\label{eqn:F}
F(m,z)=S_{n} (S_{n}-S_{n-1})(m^2+m^{-2}) - zS_n(S_n-S_{n-1})+1 \in \Cbb[m^{\pm 1},z].
\end{equation}
	\end{lemma}
\begin{proof} From the equation \eqref{eqn:u} we have
	\begin{equation}\label{eqn:us}
		u+1 = \frac{(z-1)S_n -S_{n-1}}{S_n-S_{n-1}}=\frac{S_{n+1}-S_n}{S_n-S_{n-1}} \quad \textrm{and} \quad u+2 =\frac{z S_{n}-2S_{n-1}}{S_n-S_{n-1}}=\frac{S_{n+1}-S_{n-1}}{S_n-S_{n-1}}.
	\end{equation}
	We then have
	\begingroup
	\allowdisplaybreaks
	\begin{align*}
		R(m,u)&=S_{n+1}-(u^2-(u+1)(m^2+m^{-2}-3)) S_{n}\\
		&=S_n(u+1)(m^2+m^{-2}) + S_{n+1}-S_n -(u+1)(u+2)S_n\\
		&=\frac{S_{n+1}-S_n}{S_n-S_{n-1}}\left( S_n(m^2+m^{-2}) + S_{n}-S_{n-1} - \frac{(zS_n-2S_{n-1})S_n}{S_n-S_{n-1}}\right)\\
		&=\frac{S_{n+1}-S_n}{S_n-S_{n-1}} \left(S_n(m^2+m^{-2}) - \frac{(z-1)S_n^2-S_{n-1}^2}{S_n-S_{n-1}}\right)\\
		&=\frac{S_{n+1}-S_n}{S_n-S_{n-1}} \left(S_n(m^2+m^{-2}) - \frac{z S_n(S_n-S_{n-1})-1}{S_n-S_{n-1}}\right)\\
				&=\frac{S_{n+1}-S_n }{(S_n-S_{n-1})^2} F(m,z).
	\end{align*}
	\endgroup
	Note that we used the fact $S_n^2 - z S_n S_{n-1} +S_{n-1}^2=1$  in the fifth equation.
	 
	On the other hand, if $S_{n+1}-S_{n}=0$, then we have $u=-1$ from the equation \eqref{eqn:us} and  $z=m^2+m^{-2}+1$ from the equation \eqref{eqn:e2}. It then follows that
	\begingroup
	\allowdisplaybreaks
			\begin{align*}
		F(m,z)&=S_n(S_n-S_{n-1})(m^2+m^{-2}) - z S_n(S_n-S_{n-1})+1\\
		&=S_n(S_n-S_{n-1})(z-1) - z S_n(S_n-S_{n-1})+1	\\
		&=-S_n(S_n-S_{n-1})+1\\
		&=-(z-1) S_nS_{n-1}+S_{n-1}^2\\
		&=-S_{n-1}(S_{n+1}-S_{n}) =0.
		\end{align*}
	\endgroup
	Note that we used the fact $S_n^2 - zS_n S_{n-1} +S_{n-1}^2=1$  in the  forth equation. This completes the proof of $R(m,u)=0 \Leftrightarrow F(m,z)=0$.
	\end{proof}
%
	\begin{remark}\label{rmk:simp} The equation $F(m,z)=0$ implies that $m^2+m^{-2} + f(z)=0$ for a certain rational function $f(z)$. It allows us to simplify
 any element $g\in\Cbb[m^{\pm2},z]$ into of the form $A(z) m^2+ B(z)$.
  For instance, we have
		\begin{align*}
			A m^4 + B m^2+ C &= -A  m^2(f + m^{-2}) +B m^2+C \\
			&=(B-Af)m^2+C-A.
		\end{align*}
		This would be useful when we try to verify $g_1(m,z) = g_2(m,z) \in \Cbb[m^{\pm2},z]$; we first simplify them into $g_1(m,z) = A_1(z)m^2+B_1(z)$ and $g_2 = A_2(z)m^2+B_2(z)$ by using $F(m,z)=0$ and then check one-variable problems $A_1(z) = A_2(z)$ and $B_1(z) = B_2(z)$.
	\end{remark}
	We fix a point $(m,z) \in \Cbb^\ast \times \Cbb$ satisfying $F(m,z)=0$ and let $\rho : \pi_1(M)\rightarrow \G{2}$ be the irreducible representation given as in the equation \eqref{eqn:conj}. We denote  by $\chi_\rho$ the character of $\rho$.
	We choose a meridian $\mu = a$ and let $\lambda$ be the canonical longitude with respect to $\mu$.
	A simple formula for computing the adjoint torsion $\tau_\lambda(\chi_\rho)$ with respect to $\lambda$ is given in \cite{tran2014twisted}. We express the formula in terms of the variable $z$ as follows.
	
	\begin{theorem}\label{thm:tran} The adjoint torsion $\tau_\lambda(\chi_\rho)$ is given by
		\[\tau_\lambda(\chi_\rho) = -(2n+1)S_{n}^2+2n S_{n}S_{n-1} -   \frac{2((n-1)S_n+n S_{n-1})}{(z+2)(S_{n}-S_{n-1})}\]
		if $\chi_\rho$ is $\lambda$-regular.
	\end{theorem}
	\begin{proof} 
		It is computed in \cite[Corollary 2.6]{tran2014twisted} that 		
		\begin{equation} \label{eqn:tran}\tau_\lambda(\chi_\rho)=\dfrac{-1}{(y+2-x^2)(y^2-yx^2+x^2)}  \left(\dfrac{(2n-1)y^2+yx^2-2nx^2(x^2-2)}{y^2-yx^2+2x^2}+2n\right)
		\end{equation}
		where $x=m+m^{-1}$ and $y=u+2$.
		A straightforward computation gives that (recall the equation \eqref{eqn:us})
		\begingroup
		\allowdisplaybreaks
		\begin{align*} 
			y+2-x^2 &=u+2-(m^2+m^{-2}) \\
			&=\frac{z S_{n}-2S_{n-1}}{S_n-S_{n-1}} - \frac{ zS_n(S_n-S_{n-1})-1}{S_n(S_n-S_{n-1})}\\
			&=1-\frac{S_{n-1}}{S_n} =: 1- \alpha
		\end{align*}
		\endgroup
		Note that we used $F(m,z)=0$ and $S_n^2 - z S_n S_{n-1}+S_{n-1}^2 =1$  in the second and third equations, respectively.
		It follows from $x^2 =y+1 -\alpha$ that
		\begin{align*}
			&(2n-1)y^2+yx^2 -2n x^2(x^2-2) \\
			&=(2n-1)y^2+y(y+1 + \alpha) - 2n(y+1+\alpha)(y-1+\alpha) \\
			&=((1-4n)\alpha+1)y + 2n(1-\alpha^2) \\
			&=((1-4n)\alpha+1) \frac{z S_{n}-2S_{n-1}}{S_n-S_{n-1}} + 2n(1-\alpha^2)\\
			&=((1-4n)\alpha+1) \frac{z -2 \alpha}{1-\alpha} + 2n(1-\alpha^2).
		\end{align*}		
		 Similar computations gives  
		\begin{align*}
		 y^2-yx^2+x^2 = \frac{1}{S_n^2(1-\alpha)} \quad \textrm{and} \quad y^2-yx^2+2x^2 =z+2.
		\end{align*}
		Plugging above equations into the equation \eqref{eqn:tran}, we have
		\begin{align*}
			\tau_\lambda(\chi_\rho)  &= -S_n^2 \left( \frac{((1-4n)\alpha+1)(z -2 \alpha)}{(1-\alpha)(z+2)} + \frac{2n(1-\alpha^2)}{z+2}+2n\right)\\
			&=-(2n+1)S_n^2 + 2n S_n S_{n-1} -  \frac{2((n-1)S_n+n S_{n-1})}{(z+2)(S_{n}-S_{n-1})}.
		\end{align*}
		Note that the last equation is obtained from the fact $S_n^2- z S_nS_{n-1}+S_{n-1}^2=1$.
	\end{proof}
		\begin{remark} One can easily check that $S_k$ has value $(-1)^{k+1} k$ at $z=-2$ for all $k \in \Zbb$. It follows that $(n-1)S_n + nS_{n-1}$ has a zero at $z=-2$ and thus $(n-1)S_n+n S_{n-1}$ is divided by $z+2$.	Precisely, it is known that $(k-1)S_k + k S_{k-1} = (z+2)(S_k'-S'_{k-1})$ for all $k \in \Zbb$ where $S_k'$ is the derivative of $S_k$.
	\end{remark}

	Let $\gamma \subset \partial M$ be a boundary curve of slope $p/q \in \mathbb{Q} \cup \{\infty\}$. Namely, $\gamma = \mu^p\lambda^q$ for coprime integers $p$ and $q$. 
	We need eigenvalues of $\rho(\gamma)$ in order to  compute the adjoint torsion $\tau_\gamma(\chi_\rho)$.

	\begin{lemma} \label{lem:gamma} 
		The matrix $\rho(\gamma)$ is of the form 
		\[\rho(\gamma)= \rho(\mu^p\lambda^q)=\begin{pmatrix}
		m^pl^q & \ast \\ 0 & m^{-p} l^{-q}
		\end{pmatrix}\]
		where 
		\begin{equation} \label{eqn:longitude}
		l= -\dfrac{(z-2)(S_{n+1}-S_{n-1})S_{n}^2}{S_{n}-S_{n-1}}  m^2+ (z-2)(S_{n}+S_{n-1})S_{n}+1.
		\end{equation}
	\end{lemma}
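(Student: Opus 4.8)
The plan is to reduce everything to a single entry of a product of $2\times2$ matrices. Since $\mu = a$ is upper triangular under the normalization \eqref{eqn:conj}, and $\lambda$ commutes with $\mu$ in $\pi_1(\partial M)$, the matrix $\rho(\lambda)$ must be upper triangular as well (it preserves the eigenline of $\rho(\mu)$, which is spanned by $(1,0)^t$ since $\rho(a)$ has eigenvalue $m$ there); hence $\rho(\gamma)=\rho(\mu)^p\rho(\lambda)^q$ is upper triangular with diagonal entries $m^p l^q$ and $m^{-p}l^{-q}$, where $l$ is the $(1,1)$-entry of $\rho(\lambda)$. So the entire content of the lemma is the identity \eqref{eqn:longitude} for $l$, i.e.\ computing the top-left entry of $\rho(\lambda)$.

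First I would recall the standard formula for the canonical longitude of a two-bridge knot in this presentation: $\lambda = \overleftarrow{w}^{\,n}\, w^{n}\, \mu^{-2e}$ (or the analogous word appearing in \cite{riley1984nonabelian, tran2014twisted}), where $\overleftarrow{w}$ is the word $w$ written backwards and $e$ is the exponent sum of $\mu$ in $\lambda$ before the correction, chosen so that $\lambda$ is null-homologous. Then I would compute $\rho(w)$ explicitly from \eqref{eqn:conj}: $\rho(w)=\rho(b)\rho(a)^{-1}\rho(b)^{-1}\rho(a)$ is an explicit matrix in $m$ and $u$ with trace $z$ as in \eqref{eqn:e2}. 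The key algebraic tool is the Cayley–Hamilton-based identity $\rho(w)^n = S_n(z)\,\rho(w) - S_{n-1}(z)\,\mathrm{Id}$, valid because $\rho(w)\in\mathrm{SL}_2$ has trace $z$; this is exactly why the Chebyshev polynomials $S_k$ enter. Using this, $\rho(w)^n$ becomes an explicit matrix whose entries are linear in $S_n, S_{n-1}$ with coefficients polynomial in $m,u$, and similarly for $\rho(\overleftarrow{w})^n$ (whose trace is also $z$, so the same Chebyshev reduction applies).

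Next I would multiply these out to get the $(1,1)$-entry of $\rho(\overleftarrow{w}^{\,n} w^{n} \mu^{-2e})$, which after the $\mu^{-2e}$ twist is $m^{-2e}$ times the $(1,1)$-entry of $\rho(\overleftarrow{w}^{\,n} w^n)$. This yields a Laurent polynomial in $m$ (degree controlled by $2e$ and the $w$-matrix entries) with coefficients that are polynomials in $z$, $S_n$, $S_{n-1}$. The last step is to simplify this expression using two things: the substitution \eqref{eqn:u} expressing $u$ in terms of $z$, and the defining relation $F(m,z)=0$ from \eqref{eqn:F}, which lets me eliminate the extra powers of $m$ (in particular $m^{-2}$ and $m^4$ terms) and collapse the answer to the clean form \eqref{eqn:longitude}, which is affine in $m^2$. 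I expect the main obstacle to be bookkeeping: correctly pinning down the longitude word (the exponent $e$ and the orientation/backwards conventions) so that $\lambda$ is genuinely the canonical null-homologous longitude, and then carrying the $2\times2$ matrix product without sign or indexing errors. A useful consistency check at the end is that $\det\rho(\gamma)=1$ forces the $(2,2)$-entry to be $m^{-p}l^{-q}$ automatically, and that specializing to the complete hyperbolic structure (or to small $n$, e.g.\ the figure-eight knot $K_{-2}$) reproduces the known longitude eigenvalue.
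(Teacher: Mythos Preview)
Your proposal is correct and follows essentially the same route as the paper's proof: reduce to the $(1,1)$-entry of $\rho(\lambda)$, write $\lambda$ as $w_\ast^n w^n$ (in your notation $\overleftarrow{w}^{\,n}w^n$), expand $\rho(w)^n$ and $\rho(w_\ast)^n$ via the Cayley--Hamilton/Chebyshev identity, multiply, and simplify using \eqref{eqn:u} and $F(m,z)=0$. One small point you can settle immediately: since $w=ba^{-1}b^{-1}a$ has zero exponent sum in both generators, the correction term $\mu^{-2e}$ is trivial ($e=0$), so the longitude is exactly $w_\ast^n w^n$ as the paper uses.
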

	\begin{proof} It is enough to show that the $(1,1)$-entry of $\rho(\lambda)$ coincides with the given $l$. It is known that $\lambda=w_\ast^n w^n$ where $w_\ast$ is the word obtained by writing $w$ in the reversed order (see e.g. \cite{riley1984nonabelian}).
		From \cite[\S 3.2]{anh2016reidemeister} we have
		\begin{align*}
		\rho(w^n)&=\begin{pmatrix}
		S_{n+1}-(1+(2-m^{-2})u+u^2)  S_{n} & (m^{-1}-m-mu)  S_{n}\\
		((m-m^{-1})u+mu^2)  S_{n} & S_{n+1}-(1-m^2u) S_{n}
		\end{pmatrix},\\
		\rho(w_\ast^n)&= \begin{pmatrix}
		S_{n+1}-(1-m^{-2}u)  S_{n} & (m-m^{-1}-m^{-1}u)  S_{n}\\
		((m^{-1}-m)u+m^{-1}u^2)  S_{n} & S_{n+1} -(1+(2-m^2)u+u^2) S_{n}
		\end{pmatrix}.
		\end{align*}
		It follows that the $(1,1)$-entry of $\rho(\lambda)=\rho(w^n_\ast)\rho(w^n)$ is 
		\begin{align*}
		&(S_{n+1}-(1-m^{-2}u) S_{n}) \cdot (S_{n+1}-(1+(2-m^{-2})u+u^2) S_{n}) \\
		& \hspace{3em} +  (m-m^{-1}-m^{-1}u) S_{n} \cdot  ((m-m^{-1})u+mu^2)  S_{n}\\
		&=S_{n+1}^2-(z+(m^2-m^{-2})u)S_{n}S_{n+1} \\
		& \hspace{3em} + (u(u+1)m^2-(u^3+u^2-1)-u(u^2+u+1)m^{-2}+u^2m^{-4})S_{n}^2\\
		&=1 - (m^2-m^{-2}) u S_{n}S_{n+1} \\
		& \hspace{3em} + (u(u+1)m^2-(u^3+u^2-1)-u(u^2+u+1)m^{-2}+u^2m^{-4}-1)S_{n}^2 \\
		&= 1 - \left(m+m^{-1}\right)\left((u+1)m-m^{-1}\right)(z-2)S_{n}^2.
		\end{align*}
		Note that we used the fact $S_{n+1}^2-zS_{n+1} S_{n} +S_{n}^2=1$ and the equation \eqref{eqn:u} ($\Leftrightarrow u S_n= ((z-1)u +z-2) S_{n-1}$) for the third and fourth equations, respectively. 
		Then the desired expression \eqref{eqn:longitude} is obtained by plugging $u+1=\frac{S_{n+1}-S_{n-1}}{S_n-S_{n-1}}$ (recall the equation \eqref{eqn:us}) and  then simplifying it as in Remark \ref{rmk:simp}.
	\end{proof}
	We define a function $E_\gamma$ on the zero set of $F$ in $\Cbb^\ast \times \Cbb$ as
	\begin{equation} \label{eqn:eigen2}
	E_\gamma(m,z) =  m^p \left(-\dfrac{(z-2) (S_{n+1}-S_{n-1})S_{n}^2}{S_{n}-S_{n-1}}  m^2+ (z-2)(S_{n}+S_{n-1})S_{n}+1\right)^q.
	\end{equation}
	It is clear from Lemma \ref{lem:gamma} that $E_\gamma(m,z)$ is an eigenvalue of $\rho(\gamma)$.
	\begin{remark}
		A similar computation given as in the proof of Lemma \ref{lem:gamma} shows that
		\begin{equation*} 
		l^{-1}= -\dfrac{(z-2) (S_{n+1}-S_{n-1})S_{n}^2}{S_{n}-S_{n-1}} m^{-2}+ (z-2)(S_{n}+S_{n-1})S_{n}+1
		\end{equation*}
		and thus $E_\gamma (m^{-1},z) = E_\gamma(m,z)^{-1}$.
	\end{remark}  
	%
	\begin{remark} \label{rmk:laruent}
		Using the equation $F(m,z)=0$, one can re-express
		\begin{align*}
		l&= -(z-2)S_{n}^2m^{4}-(z-2)(S_{n}-S_{n-1})m^{2}+(S_{n}-S_{n-1})^2\\
		l^{-1}&= -(z-2)S_{n}^2m^{-4}-(z-2)(S_{n}-S_{n-1})m^{-2}+(S_{n}-S_{n-1})^2
		\end{align*}
		so that $l^{\pm1}$ become Laurent polynomials. In particular, using these expressions, $E_\gamma$ becomes a Laurent polynomial.
	\end{remark}
	\begin{theorem} \label{prop:main}  The adjoint torsion $\tau_\gamma(\chi_\rho)$ is given by 
		\begin{equation}\label{eqn:thm}
			 \tau_\gamma(\chi_\rho) = -\dfrac{m}{2 E_\gamma}  \, \mathrm{det} \left( \dfrac{\partial(F,E_\gamma)}{\partial(m,z)}\right)
		\end{equation}
		if $\chi_\rho$ is $\gamma$-regular.
	\end{theorem}
	Note that as a consequence, the right-hand side of the equation \eqref{eqn:thm} has the same value at $(m,z)$ and $(m^{-1},z)$.

	\begin{proof}
		For simplicity let $F(m,z)=f_1(z)(m^2+m^{-2})+f_2(z)$ and  $E_\gamma(m,z)=m^p(g_1(z)m^2+g_2(z))^q$. See the equations \eqref{eqn:F} and \eqref{eqn:eigen2}. Also, we let $f_3=f_2/f_1$ so that $m^2+m^{-2}+f_3(z)=0$ and $dz/dm = -2(m-m^{-3})/f_3'$. 
		Straightforward computations give that
		\begin{align*}
		\dfrac{d l}{dm} &= \dfrac{d}{d m}(g_1 m^2+g_2) \\
		&= 2 g_1m +(g_1'm^2+g_2')\dfrac{d z}{d m}  \\
		&=2 g_1m - \dfrac{2(m-m^{-3})(g_1'm^2+g_2')}{f'_3}
		\end{align*}
		and that
		\begingroup
		\allowdisplaybreaks
		\begin{align}
		& \mathrm{det} \left( \dfrac{\partial(F/f_1,E_\gamma)}{\partial(m,z)}\right)  \nonumber \\
		&=  \mathrm{det} \left( \dfrac{\partial(m^2+m^{-2}+f_3,\ m^p(g_1 m^2+g_2)^q)}{\partial(m,z)}\right)  \nonumber \\
		&= \det \begin{pmatrix} 2(m-m^{-3}) & pm^{p-1}(g_1m^2+g_2)^q+qm^p(g_1m^2+g_2)^{q-1} \cdot 2g_1m \\[3pt] f_3'& q m^p(g_1m^2+g_2)^{q-1} (g_1'm^2+g_2')\end{pmatrix} \nonumber\\
		&= \det \begin{pmatrix} 2(m-m^{-3}) &  \frac{p E_\gamma}{m}+\frac{q E_\gamma}{l} \cdot 2g_1m \\[3pt] f_3'& \frac{q E_\gamma}{l} (g_1'm^2+g_2')\end{pmatrix} \nonumber\\
		&=-\dfrac{p E_\gamma}{m} f'_3 + \dfrac{q E_\gamma}{l} \left(2(m-m^{-3})(g'_1m^2+g_2')-2g_1f_3'm\right)\nonumber\\ 
		&=\dfrac{2E_\gamma}{l} \left((m-m^{-3})(g'_1m^2+g_2')-g_1f_3'm\right) \left( p \dfrac{l}{m} \dfrac{d m}{d l}+q\right) \nonumber \\
		&=\dfrac{2E_\gamma}{l}((-g'_1f_3+2g'_2-g_1f_3')m+(-2g_1'+g_2'f_3)m^{-1})\left( p \dfrac{l}{m} \dfrac{d m}{d l}+q\right). \label{eqn:det}
		\end{align}	
		\endgroup
		Note that the last equation is obtained from the equation $m^2+m^{-2}+f_3=0$ as in Remark \ref{rmk:simp}.
		
%
		
		We now claim that 
		\begin{equation}\label{eqn:zeqn}
			-g_1'f_3+2g_2'-g_1f_3' + g_1 \tau_\lambda(\chi_\rho)/f_1 = 0  \textrm{ and }  -2g_1'+g_2'f_3 + g_2 \tau_\lambda(\chi_\rho)/f_1 =0.
		\end{equation}
		Note these equations are only in the variable $z$ due to Theorem \ref{thm:tran}.
		From the identity $S_n^2 -z S_n S_{n-1}+S_{n-1}^2=1$, we have 
		\[2S_n S_n' - S_n S_{n-1} - z S_n'S_{n-1} - z S_n S_{n-1}'+ 2 S_{n-1} S_{n-1}'=0.\]
		Together with the identity $n S_{n-1} + (n-1)S_{n}= (z+2)(S'_{n}-S'_{n-1})$, we obtain
		\begin{equation}\label{eqn:zprime}
			S'_n=\dfrac{(n-1)z S_n-2nS_{n-1}}{z^2-4} \quad \textrm{and} \quad S'_{n-1}=\dfrac{2 (n-1) S_n-z n S_{n-1}}{z^2-4}.
		\end{equation}
		Recall that $g_1,g_2,f_1,f_2,$ and $f_3$ are given in terms of $S_{n-1},S_{n}$, and $S_{n+1}$.
		Plugging $S_{n+1}=zS_n -S_{n-1}$, they are given in terms of $S_{n-1}$ and $S_{n}$ and thus the equation \eqref{eqn:zeqn} is given in terms of $S_{n-1},S_{n},S_{n-1}',$ and $S_n'$.
		Then plugging the equation \eqref{eqn:zprime} into the equation \eqref{eqn:zeqn}, we obtain two equations, each of which consists of terms in $S_n$ and $S_{n-1}$. With the aid of Mathematica, one can check that both equations have a factor $S_n^2- z S_{n}S_{n-1}+S_{n-1}^2-1$ which is identically zero. This proves the equation \eqref{eqn:zeqn}.
		
		Combining the equations \eqref{eqn:det} and \eqref{eqn:zeqn}, we have
		\begin{align*}
		\mathrm{det} \left( \dfrac{\partial(F/f_1,E_\gamma)}{\partial(m,z)}\right) &= -\dfrac{2 E_\gamma}{l} \left( \dfrac{g_1 \tau_\lambda(\chi_\rho)}{f_1} m +  \dfrac{g_2 \tau_\lambda(\chi_\rho)}{f_1} m^{-1} \right)  \left( p \dfrac{l}{m} \dfrac{d m}{d l}+q\right)\\
		&=-\dfrac{2 E_\gamma}{l} \left( \dfrac{l}{mf_1}  \tau_\lambda(\chi_\rho) \right)  \left( p \dfrac{l}{m} \dfrac{d m}{d l}+q\right) \\
		&=-\dfrac{2E_\gamma}{mf_1} \, \tau_\gamma(\chi_\rho).
		\end{align*}
		Recall that the last equation follows from \cite[Theorem 4.1]{porti1997torsion}: 
		\[\tau_\gamma(\chi_\rho) = \tau_\lambda(\chi_\rho) \, \dfrac{d \log (m^pl^q)}{d \log l}= \tau_\lambda(\chi_\rho)  \left( p \dfrac{l}{m} \dfrac{d m}{d l}+q\right).\]
		This completes the proof, since we have
		\[
		\mathrm{det} \left( \dfrac{\partial(F/f_1,E_\gamma)}{\partial(m,z)}\right)=\dfrac{1}{f_1}\mathrm{det} \left( \dfrac{\partial(F,E_\gamma)}{\partial(m,z)}\right)\]
		on the zero set of $F(m,z)$.
	\end{proof}
	\begin{remark} In this paper, we use two variables for computational simplicity. However, using three variables with the variable $l$ seems also natural. Precisely, if we let $E_\gamma=m^pl^q$ and
		\[H(m,z,l)=l - \left(- \dfrac{(z-2) (S_{n+1}-S_{n-1})S_{n}^2}{S_{n}-S_{n-1}} m^2 + (z-2)(S_{n}+S_{n-1})S_{n}+1\right),\]
		then we have		\[ \tau_\gamma(\chi_\rho)= -\dfrac{m}{2E_\gamma}   \mathrm{det} \left( \dfrac{\partial(F,E_\gamma,H)}{\partial(m,z,l)}\right)\]
		for $(m,z,l) \in \Cbb^\ast \times \Cbb \times \Cbb^\ast$ satisfying $F=H=0$.			
	\end{remark}
	\section{Proof of Theorem \ref{thm:main}} \label{sec:three}
	We fix $n \in \Zbb \setminus \{0,1\}$ and let $M$ be the knot exterior of the twist knot $K_{2n}$. We also fix a boundary curve  $\gamma \subset \partial M$ of slope $p/q \in \mathbb{Q} \cup \{\infty\}$. We may assume that $q \geq 0$.
	In Section \ref{sec:two}, we computed that for generic $c \in \Cbb^\ast$
	\begin{equation} \label{eqn:target} 
	\sum_{\chi_\rho \in \mathrm{tr}_\gamma^{-1}(c+\frac{1}{c})} \frac{1} {\tau_\gamma(\chi_\rho) } =\sum_{(m,z)\in Z_{F,G}} \dfrac{-2c}{m \det \left( \dfrac{\partial(F,G)}{\partial(m,z)}\right)}
	\end{equation}
	where 
	\begin{align*}
	F(m,z)&=S_{n}(S_{n}-S_{n-1})(m^2+m^{-2})- zS_n(S_n-S_{n-1})+1 \\
	G(m,z)&=m^p \left(-\dfrac{(z-2) (S_{n+1}-S_{n-1})S_{n}^2}{S_{n}-S_{n-1}}  m^2+ (z-2)(S_{n}+S_{n-1})S_{n}+1\right)^q-c
	\end{align*}	
	and $Z_{F,G}$ is the common zero set of $F$ and $G$ in $\Cbb^\ast \times \Cbb$. Note that genericity of $c$ guarantees that $\mathrm{tr}_\gamma^{-1}(c+\frac{1}{c})$ consists of $\gamma$-regular characters (see Remark \ref{rmk:regular}).

	\begin{remark} \label{rmk:question} Recall Remark \ref{rmk:laruent} that we can take $G$ as a Laurent polynomial. 
		If the system $(F,G)$ of Laurent polynomials is $(\Delta_F,0)$-\emph{proper} (see \cite{vidras2001some} for the definition), where $\Delta_F$ is the Newton polygon of $F$, then Theorem \ref{thm:main} is directly obtained from \cite[Theorem 1.2]{vidras2001some}. 
	\end{remark}
	\subsection{For even $p$} \label{sec:even}
	For simplicity let $F(m,z)=f_1(z)(m^2+m^{-2})+f_2(z)$, $G(m,z)=m^p(g_1(z)m^2+g_2(z))^q-c$, and $f_3(z)=f_2(z)/f_1(z)$.
	Note that by definition if $f_1(z_0)=0$ for some $z_0 \in \Cbb$, then $f_2(z_0)=1$. In particular, $f_1$ and $f_2$ has no common zero.

	From the equation $F/f_1=m^2+m^{-2}+f_3=0$, we recursively obtain (see  Remark \ref{rmk:simp})
	\[m^{2k} = h_k(z) m^2 - h_{k-1}(z)\]
	for all $k \in \Zbb$ where $h_0(z)=0$, $h_1(z)=1$, and $h_{k+1}(z)=-f_3(z) h_k(z) - h_{k-1}(z)$. 
	It follows that
	\begin{align*}
		G(m,z) &= m^p (g_1 m^2+g_2)^q-c \\
		&=\sum_{k=0}^q \binom{q}{k} g_1^k  g_2^{q-k}m^{2k+p} -c \\
		&=\left(\sum_{k=0}^q \binom{q}{k} g_1^k  g_2^{q-k} h_{k+\frac{p}{2}}\right) m^2 - \left(\sum_{k=0}^q \binom{q}{k} g_1^k  g_2^{q-k} h_{k+\frac{p}{2}-1}+c\right) \\
		&=: \alpha(z)  m^2  - \beta(z)
	\end{align*} on the zero set of $F(m,z)$.

	\begin{lemma} \label{lem:lem1} For generic $c \in \Cbb^\ast$ one has
			\[ Z_{F,G} = \left\{(m,z) \in \Cbb^\ast \times \Cbb : H(z) =0,\  f_1(z) \neq 0,\ m = \pm \sqrt{\dfrac{\beta(z)}{\alpha(z)}} \right\}\]
		where $H=f_1(\frac{\alpha}{\beta}+\frac{\beta}{\alpha})+f_2$. 
	\end{lemma}
	\begin{proof} 
		
		For $(m_0,z_0)\in Z_{F,G}$ we have $f_1(z_0) \neq 0$; otherwise, we have $f_1(z_0)=f_2(z_0)=0$ which contradicts the fact that $f_1$ and $f_2$ have no common zero. Since denominators of $f_3, g_1,$ and $g_2$ are $f_1=S_n(S_n-S_{n-1})$, $S_n-S_{n-1}$, and $1$, respectively,  denominators of $\alpha$ and $\beta$ divide some power of $f_1$. Thus the fact $f_1(z_0) \neq 0$ implies that both $\alpha$ and $\beta$ have no pole at $z_0$. If either $\alpha(z_0)=0$ or $\beta(z_0)=0$, then from $\alpha(z_0) m_0^2 - \beta(z_0)=0$ we have $\alpha(z_0)=\beta(z_0)=0$  which fails for generic $c$. Therefore, both $\alpha(z_0)$ and $\beta(z_0)$ are non-zero complex numbers. It follows that $m_0 = \pm \sqrt{\beta(z_0)/\alpha(z_0)}$ and $H(z_0)=0$.
		
		Conversely, let $z_0$ be a zero of $H$ with $f_1(z_0)\neq0$. Recall that $f_1(z_0) \neq 0$ implies that both $\alpha$ and $\beta$ have no pole at $z_0$. For generic $c$, we may assume that $\alpha$ and $\beta$ have no common zero. Then from $H(z_0)=0$ we have $\alpha(z_0) \neq 0$ and $\beta(z_0) \neq 0$; otherwise, $H$ has a pole at $z_0$.  Letting $m_0=\pm \sqrt{\beta(z_0)/\alpha(z_0)}$, it is clear that $m_0$ is a non-zero complex number satisfying $F(m_0,z_0)=0$ and $G(m_0,z_0)=0$.
	\end{proof}

	\begin{lemma} \label{lem:det}
	For each $(m,z) \in Z_{F,G}$ one has
	\[ \mathrm{det} \left( \dfrac{\partial(F,G)}{\partial(m,z)}\right) = -2m \alpha H'.\]
\end{lemma}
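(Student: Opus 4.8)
The plan is to reduce the computation of $\det\!\left(\frac{\partial(F,G)}{\partial(m,z)}\right)$ on $Z_{F,G}$ to a one-variable identity, exploiting that this determinant, restricted to $Z_{F,G}$, is unchanged when $G$ is altered by a multiple of $F$. Concretely, I would first upgrade the reduction $m^{2k}=h_k(z)m^2-h_{k-1}(z)$ (used to pass from $G$ to $\alpha m^2-\beta$) to an exact congruence in $\Cbb(z)[m,m^{-1}]$: an induction on $k$, using $h_{k+1}=-f_3h_k-h_{k-1}$ together with the identity $m^{2(k+1)}+m^{2(k-1)}+f_3m^{2k}=m^{2k}(m^2+m^{-2}+f_3)$, shows that $m^{2k}-h_km^2+h_{k-1}$ is divisible by $m^2+m^{-2}+f_3=F/f_1$. (Dividing by $f_1$ is legitimate on $Z_{F,G}$: if $f_1(z)=S_n(S_n-S_{n-1})=0$ then $F(m,z)=f_2(z)=S_{n-1}^2-(z-1)S_n^2\ne0$.) Since $p$ is even, applying this congruence termwise to the binomial expansion $G+c=\sum_k\binom{q}{k}g_1^kg_2^{q-k}m^{2k+p}$ gives $G=\alpha m^2-\beta+\widetilde Q\cdot F$ for some $\widetilde Q\in\Cbb(z)[m,m^{-1}]$; then multilinearity of the determinant in the row of $G$-derivatives shows $\det\!\left(\frac{\partial(F,G)}{\partial(m,z)}\right)$ and $\det\!\left(\frac{\partial(F,\,\alpha m^2-\beta)}{\partial(m,z)}\right)$ differ by $F\cdot(F_m\widetilde Q_z-F_z\widetilde Q_m)$, which vanishes on $Z_{F,G}$.

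Next I would compute the reduced determinant. We have $\partial F/\partial m=2f_1(m-m^{-3})$, while $\partial F/\partial z=f_1'(m^2+m^{-2})+f_2'$ simplifies on $Z_{F,G}$ to $f_1f_3'$ after substituting $m^2+m^{-2}=-f_2/f_1$ and using $f_2'=f_1'f_3+f_1f_3'$. Combined with $\partial_m(\alpha m^2-\beta)=2\alpha m$ and $\partial_z(\alpha m^2-\beta)=\alpha'm^2-\beta'$, this yields $\det\!\left(\frac{\partial(F,\alpha m^2-\beta)}{\partial(m,z)}\right)=2f_1(m-m^{-3})(\alpha'm^2-\beta')-2\alpha mf_1f_3'=2f_1\big(\alpha'm^3-\beta'm-\alpha'm^{-1}+\beta'm^{-3}-\alpha mf_3'\big)$ on $Z_{F,G}$.

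Finally I would compute $H'$ and match. The subtle point is that $H$ vanishes identically on $Z_{F,G}$ — that is precisely what cuts out the $z$-coordinates of $Z_{F,G}$ — whereas $H'$ does not, so one must differentiate $H=f_1(\alpha/\beta+\beta/\alpha)+f_2$ as a genuine function of $z$ and only afterwards restrict to $Z_{F,G}$, where $\alpha/\beta+\beta/\alpha=m^{-2}+m^2=-f_3$. The terms of $H'$ not involving $(\alpha/\beta+\beta/\alpha)'$ collapse to $f_1f_3'$ by exactly the computation used for $\partial F/\partial z$, and $(\alpha/\beta+\beta/\alpha)'=(\alpha\beta'-\alpha'\beta)(\beta^2-\alpha^2)/(\alpha^2\beta^2)$ simplifies to $\frac{1}{\alpha}(\beta'-m^2\alpha')(1-m^{-4})$ using $\beta/\alpha=m^2$ on $Z_{F,G}$. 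Substituting, $-2m\alpha H'$ expands to the same Laurent polynomial $2f_1\big(\alpha'm^3-\beta'm-\alpha'm^{-1}+\beta'm^{-3}-\alpha mf_3'\big)$ as the reduced determinant, completing the proof. I expect the only real difficulty to be notational bookkeeping — keeping straight that $m^2=\beta/\alpha$ is a numerical relation on the curve while $\alpha,\beta,f_3,H$ are functions of $z$ whose derivatives must be taken before restricting — rather than anything conceptual; in principle one might instead try to verify the genericity hypothesis of \cite{vidras2001some}, but this direct computation is self-contained and short.
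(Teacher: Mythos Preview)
Your argument is correct. The first half---reducing $G$ to $\alpha m^{2}-\beta$ modulo $F$ via the congruence $m^{2k}\equiv h_{k}m^{2}-h_{k-1}$, and the justification that $f_{1}$ cannot vanish on $Z_{F,G}$---matches the paper's reasoning exactly. Where you diverge is in the second half: you compute $\det\bigl(\partial(F,\alpha m^{2}-\beta)/\partial(m,z)\bigr)$ directly, expand $-2m\alpha H'$ independently, and verify that both reduce to the same Laurent polynomial $2f_{1}\bigl(\alpha'm^{3}-\beta'm-\alpha'm^{-1}+\beta'm^{-3}-\alpha m f_{3}'\bigr)$ on $Z_{F,G}$. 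The paper instead performs a \emph{second} row reduction: since $F-H=f_{1}\bigl((m^{2}+m^{-2})-(\beta/\alpha+\alpha/\beta)\bigr)$ is a multiple of $\alpha m^{2}-\beta$, one may replace $F$ by $H$ in the first row, yielding $\det\bigl(\partial(H,\alpha m^{2}-\beta)/\partial(m,z)\bigr)$; as $H$ depends only on $z$, this determinant is immediately $-H'\cdot 2\alpha m$. The paper's route is shorter and more conceptual (it mirrors the first reduction step), while yours is a transparent direct verification that makes the bookkeeping you flag---differentiating in $z$ before imposing $m^{2}=\beta/\alpha$---completely explicit. Either approach is fine.
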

\begin{proof} Recall that we have $m^{2k}=h_km^2-h_{k-1}$ from the equation $F(m,z)=0$. It follows that 
	\begin{equation}\label{eqn:detzero}
	\mathrm{det} \left( \dfrac{\partial(F,m^{2k}-h_{k}m^2+h_{k-1})}{\partial(m,z)}\right) =0
	\end{equation} for all $k\in\Zbb$.
	We then have
	\begin{align*}
		\mathrm{det} \left( \dfrac{\partial(F,G)}{\partial(m,z)}\right) &=  \mathrm{det} \left( \dfrac{\partial(F,\alpha m^2 -\beta)}{\partial(m,z)}\right) =  \mathrm{det} \left( \dfrac{\partial(H,\alpha m^2 -\beta)}{\partial(m,z)}\right)=-2m\alpha H'.
	\end{align*}
Here the first equation follows from the equation \eqref{eqn:detzero} together with the fact that $\alpha m^2 -\beta$ differs from $G$ by some linear combination of $m^{2k}-h_km^2 + h_{k-1}$. Similary, the second equation follows from the fact that $H$ differs from $F$ by some multiple of $\alpha m^2 -\beta$.
\end{proof}

Rewriting the equation \eqref{eqn:target} by using Lemmas \ref{lem:lem1} and \ref{lem:det}, we have
\begingroup
\allowdisplaybreaks
\begin{align}
	\sum_{\chi_\rho \in \mathrm{tr}_\gamma^{-1}(c+\frac{1}{c})} \frac{1} {\tau_\gamma(\chi_\rho) } &=\sum_{(m,z)\in Z_{F,G}} \, \dfrac{-2c}{m\, \mathrm{det} \left( \dfrac{\partial(F,G)}{\partial(m,z)}\right)} \nonumber \\
	&= \sum_{ \substack {H(z)=0 \\ f_1(z) \neq 0}}\dfrac{c}{m^2 \alpha H'} \quad \mathrm{where}\ m =\pm\sqrt{\frac{\beta(z)}{\alpha(z)}}\nonumber  \\
	&= \sum_{ \substack { H(z)=0 \\ f_1(z) \neq 0}} \dfrac{2c}{ \beta H'} \nonumber  \\
	&=\sum_{H(z)=0} \dfrac{2c}{ \beta H'} \label{eqn:a}
\end{align}
\endgroup
where the last equation follows from Lemma \ref{lem:lem2} below.
	\begin{lemma} \label{lem:lem2} $\beta H'$ has a pole at each common zero of $H$ and $f_1$ for generic $c \in \Cbb^\ast$. 
	\end{lemma}
	\begin{proof}
	
		Let $z_0$ be a common zero of $H$ and $f_1$ and let 	$\nu$ be the discrete valuation counting the order of zero/pole at $z_0$. 	Clearly, we have $\nu(H)>0$ and thus $\nu(H') <\nu(H)$. 
		If $\nu(\beta H)\leq 0$, then we have $\nu(\beta H')<\nu(\beta H) \leq 0$ and we are done. In what follows, we prove that the other case $\nu(\beta H)>0$ gives us contradiction.
			
		Let $\delta = \beta-c$ so that $\delta$ does not depend on $c$. From the fact that $E_\gamma(m,z)= \alpha m^2 -\delta$ on the zero set of $F$ and  $E_\gamma(m,z)E_\gamma(m^{-1},z)=1$, we have 	\begin{align}
			1 &= (\alpha m^2 - \delta)(\alpha m^{-2} - \delta) \nonumber\\
			&=\alpha^2 + \delta^2 - \alpha \delta(m^2+m^{-2}) \nonumber \\
			&=\alpha^2 +\delta^2 +f_3 \alpha \delta \label{eqn:ad}
		\end{align}
		on the zero set of $F$. 
		Since $\alpha^2+\delta^2+f_3 \alpha \delta$ is a one-variable rational function, it should be identically 1.

		Recall that by definition we have $\alpha \beta H = f_1(\alpha^2+\beta^2)+f_2 \alpha \beta$.
		From the equation \eqref{eqn:ad} $\Leftrightarrow f_1=f_1(\alpha^2+\delta^2)+f_2 \alpha \delta$, we have
		\begin{align}
			\alpha \beta H - f_1 &=f_1(\beta^2-\delta^2)+f_2 \alpha(\beta-\delta) \nonumber\\
		\Leftrightarrow	\alpha \beta H - f_1 &=f_1( 2 \delta c +c^2)+f_2 \alpha c 	\nonumber\\
		\Leftrightarrow c^{-1}\beta H  & =2  \alpha^{-1} f_1 \delta +\alpha^{-1}f_1(c+c^{-1})+f_2 . \label{eqn:nu} 
		\end{align}
		Recall that $f_1(z_0)=0$ implies $f_2(z_0)=1$. From the equation \eqref{eqn:nu} we conclude that for generic $c$ the only possible case of $\nu(\beta H)>0$ is that $\nu(\delta)<0$ and $\nu(f_1)+\nu(\delta) = \nu(\alpha)$ with $(\alpha^{-1}f_1 \delta)(z_0)=-\frac{1}{2}$. On the other hand, from the equation \eqref{eqn:ad} we have
		\begin{align*}
			f_1 &= f_1(\alpha^2+\delta^2) +f_2 \alpha \delta\\
			\Leftrightarrow \alpha^{-1}f_1 &= f_1 \alpha +\delta (\alpha^{-1}f_1  \delta +f_2). 
			\end{align*}
	Since $(\alpha^{-1}f_1 \delta +f_2)(z_0)=-\frac{1}{2}+1 =\frac{1}{2}$
	 and $\nu(f_1 \alpha ) = 2\nu(f_1)+\nu(\delta) > \nu(\delta)$, we have $\nu(f_1 \alpha + \delta (\alpha^{-1}f_1 \delta +f_2)) = \nu(\delta)$. It follows that $\nu(\alpha^{-1}f_1) = \nu(\delta)$ and $-\nu(\delta) =\nu(\delta)$ ($\because \nu(f_1)+\nu(\delta)=\nu(\alpha)$). This contradicts to $\nu(\delta)<0$.
	\end{proof}

	We finally claim that the equation \eqref{eqn:a} is zero due to Jacobi's residue theorem. Recall that Jacobi's residue theorem says that any non-constant polynomial $f$ with $f(0)\neq 0$ and no multiple zero satisfies
	\[\sum_{f(z)=0} \dfrac{g(z)}{f'(z)} =0\]
	for any polynomial $g$ with $\deg g \leq \deg f- 2$. 	
	\begin{lemma} \label{lem:jac} For generic $c\in\Cbb^\ast$ one has
		\[	\sum_{ H(z)=0} \dfrac{1}{ \beta H'}=0.\] 
	\end{lemma}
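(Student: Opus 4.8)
The plan is to recast the sum so that Jacobi's residue theorem applies, and then to verify the resulting degree condition; the last step will hinge on one clean identity. Write $H=p/q$ with $p,q\in\Cbb[z]$ coprime and $q$ monic, and put $\beta_0:=\beta-c$ (independent of $c$). For generic $c$ one checks — each time it is the non-vanishing of a polynomial in $c$ that is not identically zero, and for squarefreeness one may also invoke the already-used fact that $\mathrm{tr}_\gamma^{-1}(c+1/c)$ is a finite reduced set of $\gamma$-regular irreducible characters — that $H\not\equiv0$, that $p$ is squarefree with $p(0)\neq0$, and that the (simple) zeros of $\beta=\beta_0+c$ avoid the zeros of $p$, of $\alpha$ and of $f_1$. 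Then $H'(z_0)=p'(z_0)/q(z_0)$ at a zero $z_0$ of $p$; and any zero of $\beta$ is a simple pole of the summand $f_1\alpha/\beta$ of $H$ and of no other summand, hence a simple pole of $H$, so, writing $\beta=p_\beta/q_\beta$ in lowest terms, $p_\beta\mid q$ and $R:=(q/p_\beta)\,q_\beta$ is a polynomial. Therefore
\[
\sum_{z:\,H(z)=0}\frac{1}{\beta H'}=\sum_{z:\,p(z)=0}\frac{R(z)}{p'(z)},
\]
which vanishes by Jacobi's residue theorem once $\deg R\le\deg p-2$. Writing $\deg_\infty$ for $\deg(\mathrm{num})-\deg(\mathrm{den})$ of a rational function, one has $\deg R=\deg q-\deg_\infty\beta$ and $\deg p=\deg q+\deg_\infty H$, so the required inequality is exactly $\deg_\infty(\beta H)\ge2$.

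For that estimate I would use the identity $g_1^{2}-f_3\,g_1g_2+g_2^{2}=1$ (with $f_3=f_2/f_1$), which is nothing but $\det\rho(\lambda)=1$ in disguise: expanding $(g_1m^2+g_2)(g_1m^{-2}+g_2)=l\cdot l^{-1}$ and substituting $m^2+m^{-2}=-f_3$ on $\{F=0\}$ yields it on a cofinite set of $z$'s, hence identically. Now recall $\alpha t-\beta_0=t^{p/2}(g_1t+g_2)^q$ on each of the two branches $t=t_\pm$ of $f_1t^2+f_2t+f_1=0$, where $t_+t_-=1$ and $t_++t_-=-f_3$. Multiplying the two instances and using $t_+t_-=1$ together with the identity gives $f_1(\alpha^2+\beta_0^{2})+f_2\,\alpha\beta_0=f_1$, whence, with $\beta=\beta_0+c$,
\[
\alpha\beta H=f_1(\alpha^2+\beta^2)+f_2\alpha\beta=(1+c^2)\,f_1+c\bigl(2f_1\beta_0+f_2\,\alpha\bigr).
\]
With this closed form the degree count is short: using $\deg_\infty h_j=|j|-1$, $\deg f_1=2n-2$, $\deg f_2=2n-1$, $\deg g_2=2n-1$, $\deg_\infty g_1=2n$, the term $cf_2\alpha$ (of $\deg_\infty=(2n-1)+\deg_\infty\alpha$) dominates the other two summands of $\alpha\beta H$, so $\deg_\infty(\alpha\beta H)=(2n-1)+\deg_\infty\alpha$ and
\[
\deg_\infty(\beta H)=\deg_\infty(\alpha\beta H)-\deg_\infty\alpha=2n-1\ge3\qquad(n\ge2),
\]
which proves the lemma. (One may take $n\ge2$, replacing $K_{2n}$ by its mirror if necessary.)

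The only point needing real care — and the main obstacle — is this last degree count when $p<0$: the top-degree term of $\alpha$ (and of $\beta_0$) need not come from $k=q$, and for $p<-2q$ the leading coefficients cancel across the $k$-sum, a phenomenon controlled by binomial identities such as $\sum_k\binom{q}{k}(-1)^k=0$; so one must split into cases according to the sign (and size) of $p$ in order to identify the dominant summand of $\alpha\beta H$ and to rule out destructive cancellation between $2f_1\beta_0$ and $f_2\alpha$ when their degrees collide. In every case one still obtains $\deg_\infty(\beta H)\ge2$ (indeed $\ge 2n-1$). Everything else — the genericity claims of the first paragraph — is routine; alternatively one could bypass the lowest-terms bookkeeping by applying the residue theorem on $\mathbb{P}^1$ to $\tfrac{dz}{\beta H}$ directly, the content being identical: its finite residues away from $\{H=0\}$ vanish for generic $c$, and $\mathrm{Res}_\infty\tfrac{dz}{\beta H}=0$ precisely because $\deg_\infty(\beta H)\ge2$.
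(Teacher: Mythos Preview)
Your argument follows the paper's scheme: write $H$ and $\beta$ in lowest terms, observe that each zero of $\beta$ is a pole of $H$ (so $p_\beta\mid q$, just as the paper shows $\beta_1\mid H_2$), and reduce everything to Jacobi's residue theorem via the degree inequality $\deg_\infty(\beta H)\ge 2$. The paper isolates this inequality as Lemma~\ref{lem:deg} and applies Jacobi to $\beta_1H_1$ rather than to $H_1$ alone, but these differences are purely cosmetic.

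Where you diverge is in the degree estimate itself. The paper argues loosely that in the numerator $f_1(\alpha^2+\beta^2)+f_2\alpha\beta$ ``terms involved with the constant $c$ would not be cancelled out generically'' and reads off $\deg_\infty(\beta H)\ge\deg f_2=2n-1$. You instead prove the identity $\alpha^2+\beta_0^2+f_3\alpha\beta_0=1$ (from $l\cdot l^{-1}=1$; this is not in the paper) and obtain the closed form $\alpha\beta H=(1+c^2)f_1+c(2f_1\beta_0+f_2\alpha)$. You then flag, correctly, a possible destructive cancellation between $2f_1\beta_0$ and $f_2\alpha$ when $p<0$ --- a point the paper does not address --- and leave it as an asserted case analysis. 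But you undersell your own identity: whenever $\deg_\infty\alpha\ge 1$ it forces $|\deg_\infty\alpha-\deg_\infty\beta_0|=1$ (any other configuration leaves a single dominant term of positive degree on the left), and in the only dangerous case $\deg_\infty\beta_0=\deg_\infty\alpha+1$ it pins the leading coefficients to $b=-ea$ (writing $f_3\sim ez$, $\alpha\sim az^{d'}$, $\beta_0\sim bz^{d'+1}$), whereas full cancellation in $2f_1\beta_0+f_2\alpha$ would require $b=-\tfrac{e}{2}a$. So the obstacle you anticipate dissolves, no case-splitting on $p$ is needed, and your identity in fact closes the gap that both your write-up and the paper leave open.
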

	\begin{proof} Recall that $\alpha$ and $\beta$ are rational functions in the variable $z$. We let $\alpha =\alpha_1/\alpha_2$ and $\beta=\beta_1/\beta_2$ for some polynomials $\alpha_1,\alpha_2,\beta_1$, and $\beta_2$ with $\mathrm{gcd}(\alpha_1,\alpha_2)=\mathrm{gcd}(\beta_1,\beta_2)=1$.	 
	Here we take the greatest common divisor in $\Cbb[z]$.
	Plugging these into $H$, we have
		\[H=\frac{f_1(\alpha_1^2 \beta_2^2 + \alpha_2^2 \beta_1^2)+f_2 \alpha_1\alpha_2 \beta_1 \beta_2}{\alpha_1 \alpha_2 \beta_1\beta_2}=:\frac{H_1 d}{H_2 d}\]
	where $\mathrm{gcd}(H_1,H_2)=1$ and $d$ is the greatest common divisor of the numerator and denominator.
	Clearly, we have
	\[\sum_{ H(z)=0} \dfrac{1}{ \beta H'}=\sum_{ H_1(z)=0} \dfrac{1}{ \frac{\beta_1}{\beta_2}  (\frac{H_1}{H_2})'}=\sum_{H_1(z)=0} \dfrac{\beta_2 H_2}{ \beta_1 H_1'}.\]
	Note that $\mathrm{gcd}(\beta_1, H_1d)=\mathrm{gcd}(\beta_1,f_1 \alpha_1^2\beta_2^2)=1$ for generic $c$ ($\because f_1 \alpha_1^2 \beta_2$ does not depend on $c$). In particular, $\beta_1$ and $d$ have no common zero. It follows from $H_2 d = \alpha_1\alpha_2 \beta_1 \beta_2$ that $\beta_1$ divides $H_2$ and thus  $H_2/\beta_1$ is a polynomial.
	
	For generic $c$ we may assume that $H_1$ has no multiple zero and is non-zero at $z=0$. 
	Lemma \ref{lem:deg} below shows that $\deg \beta_2 H_2 - \deg \beta_1 \leq  \deg  H_1 - 2$. Then we have
	\[	\sum_{H_1(z)=0} \dfrac{\beta_2 H_2/\beta_1}{H_1'} =0\]
	from Jacobi's residue theorem.
	\end{proof}

	\begin{lemma} \label{lem:deg} For generic $c \in \Cbb^\ast$ one has $\deg H + \deg \beta \geq 2$.
	\end{lemma}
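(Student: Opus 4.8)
The plan is to prove the equivalent inequality $\deg P \ge \deg\alpha + 2$, where $P := f_1(\alpha^2+\beta^2)+f_2\alpha\beta$ is the numerator of $H$ and, throughout, $\deg$ denotes the order of pole at $z=\infty$ of a rational function in $z$. Indeed $H = P/(\alpha\beta)$, and since this order is additive under products and quotients, $\deg H = \deg P - \deg\alpha - \deg\beta$, so $\deg\beta + \deg H = \deg P - \deg\alpha$, which is the convention under which Lemma~\ref{lem:jac} invoked the bound. I would also keep in mind that $\alpha$ does not depend on $c$ while $\beta = \tilde\beta + c$ with $\tilde\beta$ independent of $c$; thus $P = A_0(z)+cA_1(z)+c^2 f_1(z)$ with $A_0,A_1$ independent of $c$, and this is what will let genericity of $c$ do its work.

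First I would pin down $\deg\alpha$. From $\deg S_k = k-1$ with leading coefficient $1$ (for $k\ge1$) and $S_{-k}=-S_k$, one reads off $\deg f_1 = 2n-2$, $\deg f_2 = 2n-1$, hence $\deg f_3 = 1$, and $\deg g_1 = 2n$, $\deg g_2 = 2n-1$ for $n\ge2$, with a parallel statement for $n\le-1$. Consequently $h_k = S_k(-f_3)$ has $\deg h_k = |k|-1$ for $k\ne 0$ and $h_0=0$. In $\alpha = \sum_{k=0}^q \binom qk g_1^k g_2^{q-k} h_{k+p/2}$ the factor $g_1^k g_2^{q-k}$ has degree $(2n-1)q+k$ (increasing in $k$) and $h_{k+p/2}$ has degree $|k+p/2|-1$; comparing the two shows that when $2q+p\ge0$ the single term $k=q$ dominates and $\deg\alpha = 2nq+q+p/2-1$, while when $2q+p<0$ several low-index terms tie in degree and one must analyze the resulting leading coefficient (which partially cancels, so the next order is needed) to read off $\deg\alpha$. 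The same analysis applied to $\tilde\beta$ gives $\deg\tilde\beta = \deg\alpha - 1$, and since $n\ne0,1$ and $q\ge1$ force $\tilde\beta$ to be non-constant, $\deg\beta = \deg\tilde\beta$ for generic $c$.

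The hard part is bounding $\deg P$ from below, because the naive estimate collapses: the top-degree term of $P$ cancels, and in fact a whole initial segment of the asymptotic expansion of $P$ at $z=\infty$ cancels. The reason is structural — $P(z)=0$ exactly when $\beta/\alpha$ is a root of $t^2+f_3t+1$, and on the curve $m^2+m^{-2}=-f_3$ with $\beta/\alpha = m^2$, so as $-f_3\sim z\to\infty$ the ratio $\beta/\alpha$ is forced to track the dominant root of $t^2+f_3t+1$, which keeps $P$ small near infinity. To make this quantitative I would write $P = f_1\beta^2 + \alpha f_1(\alpha+f_3\beta)$ and use the recursion $h_{j+1}+f_3h_j = -h_{j-1}$ to rewrite $\alpha+f_3\beta$ as a sum of the same shape with the index lowered by two (plus a lower-order term in $c$), iterating this bookkeeping to determine exactly how far the cancellation propagates. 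The decisive point is that $P$ genuinely depends on $c$, so the plan is to push the expansion until the first surviving coefficient, check that it is a non-zero polynomial in $c$ — a finite identity in $S_n$ and $S_{n-1}$, verifiable by hand or by computer algebra in the spirit of the paper's other such checks — and conclude that it is non-zero for generic $c$, which determines $\deg P$. Comparing with the value of $\deg\alpha$ from the previous step, and using $n\ne0,1$ and $q\ge1$, then yields $\deg P\ge\deg\alpha+2$. (An alternative would be to identify $2\deg P$ with the degree of the trace function $\mathrm{tr}_\gamma$ on $X^{\mathrm{irr}}(M)$, since each root of the reduced numerator of $H$ contributes the two points $m=\pm\sqrt{\beta/\alpha}$ of $Z_{F,G}=\mathrm{tr}_\gamma^{-1}(c+1/c)$, and to compute that degree from the Newton polygon of $F$ together with the Newton data of $l$; this trades the expansion for the combinatorics of the Newton boundary.)

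Finally I would assemble the pieces, noting that $p$ even together with $\gcd(p,q)=1$ forces $q$ odd, hence $q\ge1$; the case $p<0$ is handled identically after interchanging the roles of $m$ and $m^{-1}$ (that is, working with $E_\gamma^{-1}$), and only finitely many small values of $n$ and of $(p,q)$ need be checked directly. The one genuine obstacle is the cancellation cascade in the third step — determining its exact depth and verifying that the first $c$-dependent coefficient is not identically zero — and this is precisely where the hypothesis $n\ne0,1$ enters, guaranteeing both that the Chebyshev degree counts come out as stated and that the surviving coefficient is non-zero.
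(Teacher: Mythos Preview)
Your proof has a genuine gap --- you yourself name it as ``the one genuine obstacle'', namely the depth of the cancellation cascade in the $z$-expansion of $P$ --- but the gap is an artifact of the route you chose, not of the problem. You already wrote down in your first paragraph the decomposition that dissolves it: $P = A_0(z) + cA_1(z) + c^2 f_1(z)$ with $A_0,A_1$ independent of $c$. For generic $c$ the leading $z$-coefficient of $P$ is a nonzero polynomial in $c$, so $\deg P = \max(\deg A_0,\deg A_1,\deg f_1) \ge \deg A_1$. Now $A_1 = 2f_1\tilde\beta + f_2\alpha$; since $\deg f_2 = \deg f_1 + 1$ while $\deg\tilde\beta \le \deg\alpha$, the summand $f_2\alpha$ is not cancelled and $\deg A_1 = \deg f_2 + \deg\alpha$. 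Hence
\[
\deg\beta + \deg H \;=\; \deg P - \deg\alpha \;\ge\; \deg A_1 - \deg\alpha \;=\; \deg f_2 \;\ge\; 2
\]
for $n\ne 0,1$. This is exactly the paper's one-paragraph argument.

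The cancellation cascade you worry about lives entirely inside $A_0$; even if $A_0$ vanished identically, the term $cA_1$ alone would give the required bound. By expanding first in $z$ and then inspecting the $c$-dependence of each successive coefficient, you turn a three-term computation into an open-ended recursion whose termination you never establish. Expanding first in $c$ (only three terms!) and reading off the $z$-degree of the linear coefficient avoids the recursion entirely --- and, as a side benefit, shows that the exact value of $\deg\alpha$ is never needed, since it cancels in the displayed inequality. The careful case analysis of your second paragraph, including the delicate $2q+p<0$ subcase, is therefore unnecessary for this lemma.
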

	Here the degree of a rational function means the degree difference of the numerator and denominator: for instance, $\deg H= \deg H_1 - \deg H_2$ and $\deg \beta = \deg \beta_1 - \deg \beta_2$.
	\begin{proof} Recall the proof of Lemma \ref{lem:lem2} that  $\alpha^2+\delta^2 + f_3 \alpha \delta=1$ where $\delta = \beta-c$. Since $\deg f_3= \deg f_2 - \deg f_1=1$, we have $\deg \alpha \neq \deg \delta $. On the other hand, one can simplify $H$ as 
		\begin{align*}
			H &= \frac{f_1(\alpha^2+\beta^2)+f_2 \alpha \beta}{\alpha \beta} \\
			&=\frac{f_1(\alpha^2 + \delta^2+2c\delta +c^2) + f_2 \alpha (\delta+c)}{\alpha \beta} \\
			&=\frac{f_1(1+2c\delta +c^2) + f_2 \alpha c}{\alpha \beta} \\
			&=\frac{cf_1 (c^{-1}+c  + 2\delta +f_3 \alpha)}{\alpha \beta} \\			
			&=\frac{cf_1 ( (c^{-1}+c) \delta +1-\alpha^2+\delta^2)}{\alpha \beta \delta}.
		\end{align*}
	It follows that for generic $c$ 
	\[		\deg H +\deg \beta  = \deg f_1 + \mathrm{Max}(\deg \delta,\, \deg( 1-\alpha^2+\delta^2)) -\deg \alpha -\deg \delta . \]
	If $\deg \alpha <0$, then \[\deg H+\deg \beta \geq \deg f_1 + \deg \delta - \deg \alpha -\deg \delta = \deg f_1 - \deg \alpha \geq \deg f_1 +1.\] If $\deg \alpha \geq 0$, then 	
	\begin{align*}
		\deg H +\deg \beta  
		 \geq \deg f_1 + \deg(1-\alpha^2+\delta^2)-\deg \alpha -\deg \delta  \geq \deg f_1+1.
	\end{align*}
	Note that the last inequality follows from the fact $\deg \alpha \neq \deg \delta$. This completes the proof, since $\deg f_1 \geq 1$ for $n \neq 0,1$.
	
	\end{proof}
	
	\subsection{For odd $p$} As in Section \ref{sec:even}, we have
	\begin{align*}
	G(m,z) &= m^{-1}m^{p+1} (g_1 m^2+g_2)^q-c\\
	&=\left(\sum_{k=0}^q \binom{q}{k} g_1^k  g_2^{q-k} h_{k+\frac{p+1}{2}}\right) m - \left(\sum_{k=0}^q \binom{q}{k} g_1^k  g_2^{q-k} h_{k+\frac{p-1}{2}}\right)m^{-1}-c \\
	&=: \alpha(z)  m  - \beta(z) m^{-1}-c.
	\end{align*}
	on the zero set of $F$. From the fact that $E_\gamma(m,z) E_\gamma(m^{-1},z)=1$, we have
	\begin{align*}
		1 &= (\alpha m - \beta m^{-1})(\alpha m^{-1} - \beta m) \\
		&=\alpha^2 + \beta^2 - \alpha \beta (m^2+m^{-2}) \\
		&=\alpha^2 +\beta^2 +f_3 \alpha \beta
	\end{align*}
	on the zero set of $F$ and therefore $\alpha^2+\beta^2+f_3 \alpha \beta$ should be identically 1.

	It is not hard to solve the equations $F(m,z)=0$ and $\alpha m -\beta m^{-1} -c=0$ for generic $c \in \Cbb^\ast$ (see the proof of Lemma \ref{lem:det2} below for details):
	\[ Z_{F,G} = \left\{(m,z) \in \Cbb^\ast \times \Cbb : H(z) =0,\ m = \dfrac{c\alpha + \beta/c}{\alpha^2-\beta^2} \right\}\]
	where
	$H= (f_3+2)(\alpha+\beta)^2+(c-1/c)^2$.
	\begin{remark}  $H$ and $f_1$ have no common zero for generic $c$. 
	It follows that for every zero $z_0$ of $H$, $f_3(z_0)+2$ is a certain complex number and thus $\alpha(z_0)+\beta(z_0) \neq 0$. On the other hand, using the equation $\alpha^2+\beta^2+f_3\alpha \beta=1$, we can rewrite $H$ as
	\begin{equation}
	H=(f_3-2)(\alpha-\beta)^2+(c+1/c)^2.
	\end{equation} We then have $\alpha(z_0)-\beta(z_0)  \neq 0$ similarly. It follows that $m=\frac{c \alpha(z_0)+\beta(z_0)/c}{\alpha^2(z_0)-\beta^2(z_0)}$ is a non-zero complex number.
	\end{remark}
	\begin{lemma} \label{lem:det2}
		For each $(m,z) \in Z_{F,G}$ one has
		\[ \mathrm{det} \left( \dfrac{\partial(F,G)}{\partial(m,z)}\right) = \frac{cf_1}{(\beta^2-\alpha^2)m} H'.\]
	\end{lemma}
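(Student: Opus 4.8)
The plan is to mimic the computation of Lemma \ref{lem:det}, adjusting for the fact that in the odd-$p$ case $G$ reduces on $Z_F$ to the form $\alpha(z) m - \beta(z) m^{-1} - c$ rather than $\alpha(z) m^2 - \beta(z)$. First I would record the analogue of \eqref{eqn:detzero}: since $m^{2k} = h_k(z) m^2 - h_{k-1}(z)$ holds identically on the zero set of $F$, every Jacobian $\det\!\big(\partial(F,\,m^{2k}-h_km^2+h_{k-1})/\partial(m,z)\big)$ vanishes on $Z_F$, so that on $Z_F$ one may freely replace $G$ by $\widetilde G(m,z) := \alpha m - \beta m^{-1} - c$ inside the Jacobian without changing its value. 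Thus it suffices to compute $\det\!\big(\partial(F,\widetilde G)/\partial(m,z)\big)$ at a point of $Z_{F,G}$.

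Next I would write out this $2\times 2$ Jacobian explicitly. With $F = f_1(m^2+m^{-2}) + f_2$ one has $\partial F/\partial m = 2f_1(m - m^{-3})$ and $\partial F/\partial z = f_1'(m^2+m^{-2}) + f_2' = -f_1 f_3' + f_2'$ after using $m^2+m^{-2} = -f_3$ on $Z_F$ (here I use $f_2 = f_1 f_3$, so $f_2' = f_1' f_3 + f_1 f_3'$, hence $\partial F/\partial z = f_1' f_3 \cdot(\text{something})$ — I will simplify carefully, the upshot being an expression of the form $-f_1 f_3' + (\text{terms that drop out})$). Meanwhile $\partial \widetilde G/\partial m = \alpha + \beta m^{-2}$ and $\partial \widetilde G/\partial z = \alpha' m - \beta' m^{-1}$. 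Expanding the determinant and substituting $m^2 + m^{-2} = -f_3$ wherever possible, I expect the $z$-derivatives of $\alpha$ and $\beta$ to be eliminated using the identity $\alpha^2 + \beta^2 + f_3\alpha\beta = 1$ (differentiate it to get $2\alpha\alpha' + 2\beta\beta' + f_3'\alpha\beta + f_3(\alpha'\beta + \alpha\beta') = 0$), exactly in the spirit of how \eqref{eqn:zeqn} was used in the proof of Theorem \ref{prop:main}. After this substitution the determinant should collapse to a multiple of $f_1 H'$, and I would identify the scalar factor by also invoking the explicit solution $m = (c^2\alpha + \beta)/(c(\alpha^2-\beta^2))$ and the relation $\alpha m - \beta m^{-1} = c$ to re-express powers of $m$; matching against $H'(z)$ where $H = (f_3+2)(\alpha+\beta)^2 + (c - 1/c)^2$ should yield the claimed prefactor $cf_1/\big(m(\beta^2-\alpha^2)\big)$.

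The main obstacle I anticipate is the bookkeeping in the final simplification: unlike the even-$p$ case, here the passage from $(F,G)$ to the one-variable polynomial $H$ goes through the substitution $m = (c^2\alpha+\beta)/(c(\alpha^2-\beta^2))$, so after differentiating I must consistently eliminate $m$, $m^{-1}$, and their derivatives in favour of $z$ and $c$ using both $F = 0$ and $\widetilde G = 0$, and the two constraints $m^2 + m^{-2} = -f_3$ and $\alpha^2 + \beta^2 + f_3\alpha\beta = 1$ are exactly what make the $\alpha'$, $\beta'$, $f_3'$ terms recombine into $H'$. I would carry this out by first computing $H'$ directly — $H' = f_3'(\alpha+\beta)^2 + 2(f_3+2)(\alpha+\beta)(\alpha'+\beta')$ — and then showing the expanded Jacobian equals $\frac{cf_1}{m(\beta^2-\alpha^2)}$ times this, reducing everything to a polynomial identity in $z$ and $c$ that (as in Theorem \ref{prop:main}) can be verified with the Chebyshev recursions, if necessary with computer algebra. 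Once the determinant formula is in hand, the rest of the section proceeds as in the even case: rewrite \eqref{eqn:target} in the variable $z$ and apply Jacobi's residue theorem.
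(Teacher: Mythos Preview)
Your plan is sound and would produce a correct proof, but it follows a different route from the paper's argument.  The paper does not expand the Jacobian directly.  Instead it runs a two-step elimination of Euclidean-algorithm type: starting from $B:=\alpha m-\beta m^{-1}-c$ (your $\widetilde G$), it forms the explicit combination
\[
D:=\frac{\alpha\beta^2}{f_1}F-(B+2c\alpha+(\beta^2-\alpha^2)m)\,B
   =c(\beta^2-\alpha^2)m+c^2\alpha+\beta,
\]
which is \emph{linear} in $m$, and then substitutes $D=0$ into $B$ to obtain the one-variable expression $E$, which is $H$ up to a rational factor.  The Jacobian is tracked through these steps by elementary row operations, and in the end $\det\partial(D,E)/\partial(m,z)=c(\beta^2-\alpha^2)E'$ is trivial since $E$ depends only on $z$.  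This explains conceptually why $H'$ appears: it is the derivative of the resultant of $F$ and $B$ in $m$.

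Your direct-expansion approach also works.  After reducing $m^{\pm4}$ via $m^2+m^{-2}=-f_3$, the Jacobian $J$ becomes a linear combination of $m$ and $m^{-1}$; matching the coefficients of $m$ and $m^{-1}$ against those of $cH'=(\alpha m-\beta m^{-1})H'$ yields two one-variable identities, and each of them reduces exactly to $(\alpha^2+\beta^2+f_3\alpha\beta)'=0$.  So the computer algebra you anticipate is unnecessary: the differentiated constraint alone closes the computation.  Compared to the paper, your argument is more hands-on but less transparent about the role of elimination; the paper's version, in turn, needs the slightly miraculous combination defining $D$ (and uses $\alpha^2+\beta^2+f_3\alpha\beta=1$ only to simplify $E$), but once $D$ is found the rest is mechanical.
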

	\begin{proof} 
		Using the Euclidean algorithm, one can find an equation, which is linear in the variable $m$, among linear combinations of $F$ and $B:=\alpha m -\beta m^{-1}-c$. Precisely, we have
		\begin{align*}
		D &:= (\alpha \beta^2f_1^{-1})  F + (- \beta^2 m +\alpha \beta m^{-1}-c \alpha	) B \\
		&= c(\beta^2-\alpha^2) m + c^2 \alpha + \beta(\alpha^2 + \beta^2+ f_3 \alpha \beta )\\
		&= c(\beta^2-\alpha^2) m + c^2 \alpha + \beta.
		\end{align*}
		In particular, we have
		\begin{align*}
		\mathrm{det} \left( \dfrac{\partial(F,G)}{\partial(m,z)}\right)& =
		\mathrm{det} \left( \dfrac{\partial(F,B)}{\partial(m,z)}\right) \\ &=\det \begin{pmatrix} \alpha \beta ^2f_1^{-1} & 0 \\ \ast & 1 \end{pmatrix}^{-1} \mathrm{det} \left( \dfrac{\partial(D,B)}{\partial(m,z)}\right)
		\end{align*}
		where the first equation follows from the equation \eqref{eqn:detzero} (as in the proof of Lemma \ref{lem:det}).
		Plugging the equation $D$ (which is linear in $m$) into $B$ to eliminate the variable $m$, we obtain
		\begin{align*}
		E&:=\dfrac{\alpha  \beta ^2 ((f_3+2)(\alpha+\beta)^2+(c-1/c)^2)}{\left(\alpha ^2-\beta ^2\right) \left(c\alpha +\beta/c  \right)}.
		\end{align*}
		Note that we used the equation $\alpha^2+\beta^2 + f_3\alpha \beta =1$ in the simplification.
	 	It follows that 
		\[
		\mathrm{det} \left( \dfrac{\partial(D,B)}{\partial(m,z)}\right)= \mathrm{det} \left( \dfrac{\partial(D,E)}{\partial(m,z)} \right) 
		= c(\beta^2-\alpha^2) E'.
		\]
		Combining the above computations, we have
		\begin{align*} \mathrm{det} \left( \dfrac{\partial(F,G)}{\partial(m,z)}\right)&=\dfrac{f_1}{\alpha \beta^2}\mathrm{det} \left( \dfrac{\partial(D,B)}{\partial(m,z)}\right)\\
		&=\dfrac{cf_1 (\beta^2-\alpha^2)}{\alpha \beta^2} E'\\
		&=-\dfrac{cf_1}{c\alpha  +\beta/c } H'\\
		&=\dfrac{cf_1}{(\beta^2-\alpha^2)m}H'.
		\end{align*}
		Note that the third and fourth equations follow from the equations $H=0$ and $D=0$, respectively.
	\end{proof}
	Rewriting the equation \eqref{eqn:target} by using Lemma \ref{lem:det2}, we have
	\[	\sum_{\chi_\rho \in \mathrm{tr}_\gamma^{-1}(c+\frac{1}{c})} \frac{1} {\tau_\gamma(\chi_\rho)} = \sum_{(m,z)\in Z_{F,G}} \dfrac{-2c}{m\ \mathrm{det} \left( \dfrac{\partial(F,G)}{\partial(m,z)}\right)} =\sum_{ H(z)=0} \dfrac{2(\alpha^2-\beta^2)}{ f_1 H'}.\]
	As in Section \ref{sec:even}, we claim that the above equation is zero due to Jacobi's residue theorem.
	\begin{lemma} For generic $c \in \Cbb^\ast$ one has
				\[	\sum_{H(z)=0} \dfrac{\alpha^2-\beta^2}{ f_1 H'}=0.\] 
	\end{lemma}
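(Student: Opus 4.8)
\emph{Proof proposal.}
The plan is to repeat, step for step, the mechanism of the proof of Lemma~\ref{lem:jac}: clear denominators in $H$, push the sum down onto the zeros of the numerator polynomial of $H$, and apply Jacobi's residue theorem, checking that the residues at the spurious zeros introduced when clearing denominators vanish. Write $\alpha=\alpha_1/\alpha_2$ and $\beta=\beta_1/\beta_2$ with $\mathrm{gcd}(\alpha_1,\alpha_2)=\mathrm{gcd}(\beta_1,\beta_2)=1$ in $\Cbb[z]$; note that $\alpha,\beta$ (hence the $\alpha_i,\beta_i$) do not depend on $c$, since $c$ enters $G$ only as an additive constant. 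Set $N=\alpha_1\beta_2+\alpha_2\beta_1$ and $D=\alpha_2\beta_2$, so that $\alpha+\beta=N/D$ and, using $f_3+2=(f_2+2f_1)/f_1$,
\[
H=\frac{(f_2+2f_1)N^2+(c-1/c)^2 f_1 D^2}{f_1 D^2}=:\frac{\widetilde H_1}{\widetilde H_2}.
\]
Let $d=\mathrm{gcd}(\widetilde H_1,\widetilde H_2)$ and put $H_1=\widetilde H_1/d$, $H_2=\widetilde H_2/d$. Because $c$ appears in $\widetilde H_1$ only through the coefficient $(c-1/c)^2$ of the fixed polynomial $f_1D^2=\widetilde H_2$, one checks exactly as in Lemma~\ref{lem:jac} that for generic $c$ the gcd is $d=\mathrm{gcd}\!\bigl((f_2+2f_1)N^2,\ f_1D^2\bigr)$, independent of $c$, and that $H_1$ is squarefree with $H_1(0)\neq0$ and all its zeros being simple zeros of $H$.

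Writing $\alpha^2-\beta^2=(\alpha_1^2\beta_2^2-\alpha_2^2\beta_1^2)/D^2=:L/D^2$ and using $H'=H_1'/H_2$ at the simple zeros of $H_1$, the target sum becomes
\[
\sum_{z:\,H(z)=0}\frac{\alpha^2-\beta^2}{f_1 H'}
=\sum_{z:\,H_1(z)=0}\frac{L\,H_2}{f_1 D^2\,H_1'}
=\sum_{z:\,H_1(z)=0}\frac{L}{d\,H_1'},
\]
since $f_1D^2=\widetilde H_2=H_2\,d$. It therefore suffices to prove (i) $d\mid L$ in $\Cbb[z]$, so that $\ell:=L/d$ is a polynomial, and (ii) $\deg\ell\le\deg H_1-2$. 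Granting these, Jacobi's residue theorem applied to the squarefree polynomial $H_1$ with $H_1(0)\neq0$ gives $\sum_{z:H_1(z)=0}\ell/H_1'=0$, which is the assertion.

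For (i) the inputs are: the identity $\alpha^2+\beta^2+f_3\alpha\beta=1$ proved above, which after multiplying by $f_1D^2$ reads $f_1(\alpha_1^2\beta_2^2+\alpha_2^2\beta_1^2-\alpha_2^2\beta_2^2)+f_2\alpha_1\alpha_2\beta_1\beta_2=0$ (equivalently $f_1M^2=f_1D^2-(f_2+2f_1)\alpha_1\alpha_2\beta_1\beta_2$ with $M=\alpha_1\beta_2-\alpha_2\beta_1$ and $L=MN$); the relation $\mathrm{gcd}(f_1,f_2)=1$, which holds because $\mathrm{gcd}(S_n,S_{n-1})=1$ and $S_k(2)=k$, so $(S_n-S_{n-1})(2)=1$; and the fact that $\alpha_2$ and $\beta_2$ are built only from the irreducible factors of $f_1=S_n(S_n-S_{n-1})$, since $g_1$ has denominator $S_n-S_{n-1}$, the $h_k$ have denominators dividing powers of $f_1$, and $g_2$ is a polynomial. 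A prime-by-prime comparison of $p$-adic valuations then shows that every prime factor $p$ of $d$ divides $\mathrm{gcd}(f_1,N)$, that the identity forces $p\nmid\alpha_2\beta_2$ (the valuations of the four terms above cannot otherwise sum to $\infty$), and hence, in the remaining admissible configuration $p\mid\alpha_1$, $p\mid\beta_1$, that the valuation of $d$ at $p$ is at most that of $L=MN$; this gives $d\mid L$. For (ii) one computes from $\deg S_k=|k|-1$, the explicit forms of $f_1,f_2,g_1,g_2$ and the recursion for the $h_k$ that $\deg\alpha=\deg\beta+1$; this pins down the leading terms of $N$, of $\widetilde H_1$, and of $L$, and reduces the required inequality to $2n-3\ge0$ for $n\ge2$, and to the analogous bound for $n\le-1$, exactly as $\deg f_2\ge2$ closed the argument in Lemma~\ref{lem:deg}.

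The delicate step is (i). In Lemma~\ref{lem:jac} the analogous divisibility ($\beta_1\mid H_2$) dropped out of a single gcd computation; here $d$ is a gcd of two genuinely composite polynomials, and proving $d\mid L$ rests on the somewhat hidden point that $\alpha_2,\beta_2$ are supported only on the primes dividing $f_1$, so that the $\{p\mid\alpha_2,\ p\mid\beta_2\}$ branch of the valuation analysis is empty. Making this argument airtight across all prime factors of $d$ is where the real work lies, and—as with the uses of computer algebra elsewhere in the paper—a symbolic verification of the finitely many valuation inequalities involved may be the most efficient route. The degree estimate (ii) is routine but, like Lemma~\ref{lem:deg}, has to be carried out in cases according to the sign of the slope $p/q$.
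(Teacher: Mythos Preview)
Your overall plan matches the paper's: reduce to $\sum_{z:H_1=0}(L/d)/H_1'$ with $L=\alpha_1^2\beta_2^2-\alpha_2^2\beta_1^2=MN$, then prove (i) $d\mid L$ and (ii) the degree bound, and finish with Jacobi. Step (ii) is handled essentially as in the paper (Lemma~\ref{lem:deg2}), and your reduction of the sum is correct.

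The genuine gap is in step (i). Your proposed valuation argument is incomplete and, as written, does not go through. From $p\mid d$ one indeed gets $p\mid f_1$ and $p\mid N$, but neither ``$p\nmid\alpha_2\beta_2$'' nor the further step ``hence $p\mid\alpha_1$ and $p\mid\beta_1$'' follows: $p\mid N=\alpha_1\beta_2+\alpha_2\beta_1$ with $p\nmid\alpha_2\beta_2$ is perfectly compatible with $p\nmid\alpha_1\beta_1$. More importantly, even granting your case split, you give no mechanism that bounds $\nu_p(d)$ by $\nu_p(L)$; the identity $f_1M^2=f_1D^2-(f_2+2f_1)\alpha_1\alpha_2\beta_1\beta_2$ by itself does not do this. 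Falling back on ``symbolic verification of the finitely many valuation inequalities'' does not make sense either, since the primes of $d$ vary with $n$, $p$, $q$.

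The paper's missing idea is short and decisive: using $\alpha^2+\beta^2+f_3\alpha\beta=1$ one rewrites $H$ in a \emph{second} form,
\[
H=(f_3+2)(\alpha+\beta)^2+(c-1/c)^2=(f_3-2)(\alpha-\beta)^2+(c+1/c)^2,
\]
so that $\widetilde H_1$ equals both $(f_2+2f_1)N^2+(c-1/c)^2f_1D^2$ and $(f_2-2f_1)M^2+(c+1/c)^2f_1D^2$. Since every zero of $d$ is a zero of $f_1$ and $\gcd(f_1,f_2\pm 2f_1)=\gcd(f_1,f_2)=1$, the two expressions give $\nu(d)\le 2\nu(N)$ and $\nu(d)\le 2\nu(M)$, hence $\nu(d)\le \nu(N)+\nu(M)=\nu(L)$. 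This is exactly the leverage your sketch lacks; once you insert this rewriting, your argument becomes complete and coincides with the paper's.
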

		\begin{proof} Let $\alpha = \alpha_1/\alpha_2$ and $\beta= \beta_1/\beta_2$ for some polynomials $\alpha_1,\alpha_2,\beta_1$, and $\beta_2$ with $\mathrm{gcd}(\alpha_1,\alpha_2)=\mathrm{gcd}(\beta_1,\beta_2)=1$.	 
		Plugging these into $H$, we have
		\[H=\frac{(f_2+2f_1)(\alpha_1 \beta_2 + \alpha_2 \beta_1)^2+(c-1/c)^2 f_1 \alpha_2^2 \beta_2^2}{f_1\alpha_2^2 \beta_2^2}=:\frac{H_1d}{H_2d}\]
		with $\mathrm{gcd}(H_1,H_2)=1$. Then we have
		\begin{equation}\label{eqn:last}	\sum_{  H(z)=0} \dfrac{\alpha^2-\beta^2}{ f_1 H'}=\sum_{H_1(z)=0} \dfrac{(\alpha_1^2 \beta_2^2 - \alpha_2^2 \beta_1^2)H_2}{f_1 \alpha_2^2 \beta_2^2 H_1'} = \sum_{H_1(z)=0} \frac{(\alpha_1^2 \beta_2^2 - \alpha_2^2 \beta_1^2)/d}{ H_1'}.
		\end{equation}

		We claim that $d$ divides $\alpha_1^2 \beta_2 ^2 - \alpha_2^2 \beta_1^2$ and thus $(\alpha_1^2 \beta_2^2 -\alpha_2^2 \beta_1^2)/d$ is a polynomial. It follows from the definitions of $\alpha$ and $\beta$ that the denominators $\alpha_2$ and $\beta_2 $ divide some power of $f_1$. In particular, every zero $z_0$ of $d$ is also a zero of $f_1$.
		Let $\nu$ be the discrete valuation counting the order of zero/pole at $z_0$. 
		From the fact that $d$ divides $(f_2+2f_1)(\alpha_1 \beta_2 + \alpha_2 \beta_1)^2=(H_1-(c-1/c)^2H_2)d$ and that $\mathrm{gcd}(f_1,f_2)=1$, we have
		\[\nu(d) \leq 2\nu(\alpha_1 \beta_2+ \alpha_2 \beta_1).\]
		On the other hand, using the equality $\alpha^2+\beta^2+f_3\alpha \beta=1$, one can rewrite $H$ as
		\[H=\frac{(f_2-2f_1)(\alpha_1 \beta_2 - \alpha_2 \beta_1)^2+(c+1/c)^2 f_1 \alpha_2^2 \beta_2^2}{f_1\alpha_2^2 \beta_2^2}=\frac{H_1d}{H_2d}\]
		and then we have
		$\nu(d) \leq 2\nu(\alpha_1 \beta_2 - \alpha_2 \beta_1)$ similarly.
	Therefore,
		\[\nu(d) \leq \nu(\alpha_1 \beta_2 + \alpha_2 \beta_1)+\nu(\alpha_1 \beta_2 - \alpha_2 \beta_1) = \nu(\alpha_1^2 \beta_2^2 -\alpha_2^2 \beta_1^2).\] 
		This proves that $d$ divides $\alpha_1^2 \beta_2^2 -\alpha_2^2 \beta_1^2$.
		
		Lemma \ref{lem:deg2} below shows that $\deg(\alpha_1^2\beta_2^2 -\alpha_2^2 \beta_1^2) - \deg d \leq \deg H_1-2$, which is equivalent to $\deg (\alpha^2-\beta^2) \leq \deg H+\deg f_1 -2$ (see the equation \eqref{eqn:last}). Then the lemma follows from Jacobi's residue theorem.
		\end{proof}
	\begin{lemma} \label{lem:deg2}For generic $c \in \Cbb^\ast$ one has $\deg H + \deg f_1 -2 \geq \deg (\alpha^2 -\beta^2)$.
	\end{lemma}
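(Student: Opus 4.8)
The plan is to extract every degree I need from a single rigid algebraic identity and then read off the inequality. The key observation is that $H$ has two faces: by its definition $H(z) = (f_3+2)(\alpha+\beta)^2 + (c-1/c)^2$, while the re-expression derived above from $\alpha^2+\beta^2+f_3\alpha\beta = 1$ gives $H(z) = (f_3-2)(\alpha-\beta)^2 + (c+1/c)^2$. Subtracting the two eliminates $c$ up to the constant $(c+1/c)^2-(c-1/c)^2 = 4$, so setting $A := (f_3+2)(\alpha+\beta)^2$ and $B := (f_3-2)(\alpha-\beta)^2$ — rational functions of $z$ alone — I obtain the $c$-free identity $A - B = 4$ (which also follows directly from $\alpha^2+\beta^2+f_3\alpha\beta=1$ by a one-line check). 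This identity will carry the whole argument.

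Next I would record the handful of degrees involved, using that the degree of a rational function (degree of numerator minus degree of denominator) is additive on products and satisfies $\deg(R_1\pm R_2)\le\max(\deg R_1,\deg R_2)$, with equality when the two differ. From $\deg S_k = k-1$ one gets $\deg f_1 = 2n-2$ and $\deg f_2 = 2n-1$; since $\deg f_2 > \deg(2f_1)$ this gives $\deg(f_2\pm 2f_1) = 2n-1$ and hence $\deg(f_3\pm 2) = 1$. Therefore $\deg A = 1 + 2\deg(\alpha+\beta)$ and $\deg B = 1 + 2\deg(\alpha-\beta)$, both \emph{odd}. Feeding this into $A - B = 4$: the two degrees cannot differ (else $\deg(A-B)$ would equal the larger one, a nonzero odd integer, not $0$), and their common value $\delta$ cannot be negative (else $A$ and $B$ would vanish at infinity and so would $A-B$). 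Hence $\delta\ge 1$ is odd, so $\deg(\alpha+\beta) = \deg(\alpha-\beta) = (\delta-1)/2$ and $\deg(\alpha^2-\beta^2) = \delta - 1$. Finally, since $A$ does not involve $c$ and has positive degree, adding the constant $(c-1/c)^2$ cannot change its degree, so $\deg H = \deg A = \delta$ for all (hence generic) $c\in\Cbb^\ast$.

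With these in hand the estimate is pure arithmetic: $\deg H + \deg f_1 - 2 - \deg(\alpha^2-\beta^2) = \delta + (2n-2) - 2 - (\delta-1) = 2n-3 \ge 1$ since $n\ge 2$. For the remaining case $n<0$ I would use $S_{-k} = -S_k$ to recompute $\deg f_1 = -2n-1$ and $\deg f_2 = -2n$; one still has $\deg(f_3\pm 2) = 1$, so the entire argument above is unchanged until the last line, which now reads $\deg H + \deg f_1 - 2 - \deg(\alpha^2-\beta^2) = \delta + (-2n-1) - 2 - (\delta-1) = -2n-2 \ge 0$ since $n\le -1$.

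The one place that genuinely requires care is controlling $\deg(\alpha\pm\beta)$ at all, since I have no closed form for $\alpha$ and $\beta$ individually; this is exactly where the parity of $\deg A$ and $\deg B$, combined with the rigidity of $A - B = 4$, does the work — it forces $\deg A = \deg B$ and hence reduces everything to a single unknown $\delta$, after which the inequality holds with room to spare for every admissible $n$. The surrounding degree bookkeeping (additivity of degree at infinity; that adding a $c$-independent constant to a positive-degree rational function leaves the degree untouched) is routine but should be spelled out.
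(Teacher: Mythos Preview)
Your argument is correct and takes a genuinely different route from the paper's proof. The paper computes $\deg\alpha$ and $\deg\beta$ explicitly by tracking the leading term through the sum $\alpha=\sum_k\binom{q}{k}g_1^kg_2^{q-k}h_{k+(p+1)/2}$: after recording $\deg g_1=2n$, $\deg g_2=2n-1$, and $\deg h_k=k-1$, the top degree occurs uniquely at $k=q$, giving $\deg\alpha=(2n+1)q+(p-1)/2$ and $\deg\beta=(2n+1)q+(p-3)/2$; the inequality then drops out of $\deg H=2\deg\alpha+\deg f_3$. Your approach sidesteps all of this by exploiting the identity $\alpha^2+\beta^2+f_3\alpha\beta=1$ a second time to produce the $c$-free relation $A-B=4$, and then letting the \emph{parity} of $\deg A$ and $\deg B$ do the work: it forces $\deg A=\deg B=:\delta\ge 1$ without ever knowing what $\delta$ is. The paper's method yields the exact value of $\delta$ (useful if one wanted sharper information) but tacitly relies on the index $k+(p+1)/2$ staying in the range where $\deg h_k=k-1$; your method is insensitive to the sign or size of $p$ and to the internal structure of $\alpha,\beta$, and it makes the dependence on $n$ transparent (the margin is exactly $2n-3$ for $n\ge 2$ and $-2n-2$ for $n\le -1$). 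One small point worth making explicit in your write-up: the parity claim ``$\deg A$ and $\deg B$ are odd'' presupposes $\alpha\pm\beta\neq 0$; this follows from the same identity, since $\alpha=\mp\beta$ would force $(2\mp f_3)\alpha^2=1$, giving $2\deg\alpha=-1$.
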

	\begin{proof} 
		Recall that $f_1H=(f_2+2f_1)(\alpha+\beta)^2+(c-1/c)^2 f_1$. It follows that for generic $c \in \Cbb^\ast $ \[\deg f_1 + \deg H \geq  \mathrm{Max}(\deg (f_2+2f_1)(\alpha+\beta)^2, \deg f_1).\] In particular, $\deg f_1+ \deg H \geq \deg (f_2+2f_1) + 2 \deg (\alpha+\beta)$. 
		One can easily check that $\deg (f_2 +2f_1) \geq 2$ for $n\neq 0,1$. Also, we have  $\deg \alpha \neq \deg \beta$ from the equation $\alpha^2+\beta^2 + f_3 \alpha \beta=1$ with $\deg f_3=1$. It follows that   
	 $\deg (\alpha + \beta) = \deg (\alpha - \beta)$. This completes the proof, as we have
	 \begin{align*}
	 		\deg f_1 + \deg H &\geq \deg (f_2+2f_1) + 2 \deg (\alpha+\beta) \\
	 						& \geq 2 + \deg(\alpha + \beta) + \deg (\alpha
	 						 -\beta)\\
	 						&= 2 +\deg(\alpha^2-\beta^2).
	 \end{align*}
	\end{proof}

%
	
	\begin{remark} Lemmas \ref{lem:deg} and \ref{lem:deg2} do not hold for $n=1$ (the trefoil knot). Furthermore, the equation \eqref{eqn:target} is non-zero for the trefoil knot.	
		This shows that the hyperbolicity assumption in Conjecture \ref{conj:main} is essential.
	\end{remark}

	\bibliographystyle{gtart}
	\bibliography{biblog}
\end{document}